\theoremstyle{plain}
\newtheorem{theorem}{Theorem}
\newtheorem{lemma}{Lemma}
\newtheorem{proposition}[lemma]{Proposition}
\theoremstyle{definition}
\theoremstyle{remark}
\newtheorem{example}[lemma]{Example}
\def\defeq{:=}
\def\eqdef{=:}
\newcommand{\closure}[1]{\overline{#1}}
\newcommand{\dnconv}{\searrow}
\newcommand{\diam}{\operatorname{diam}}
\newcommand{\sign}{\operatorname{sign}}
\newcommand{\Leb}{\mathcal L}
\newcommand{\Leba}[1]{\Leb^{#1}}
\newcommand{\Lone}{\Leba1}
\newcommand{\loc}{{\operatorname{loc}}}
\newcommand{\isect}{\cap}
\newcommand{\Ctwo}{\Ck2}
\newcommand{\Czero}{\Ck0}
\newcommand{\qeq}{\quad\eqv\quad}
\newcommand{\eqv}{\Leftrightarrow}
\newcommand{\union}{\cup}
\newcommand{\set}[1]{\{#1\}}
\newcommand{\conv}{\rightarrow}
\newcommand{\topref}[2]{\overset{\text{\eqref{#1}}}{#2}}
\newcommand{\toprefb}[3]{\overset{\text{\eqref{#1}}}{\underset{\text{\eqref{#2}}}{#3}}}
\newcommand{\Cinf}{\Ck\infty}
\newcommand{\esssup}{\operatorname{esssup}}
\newcommand{\eps}{\epsilon}
\newcommand{\woba}[2]{\mathcal{W}^{#1,#2}} % #1 is regularity s, #2 is Lp p 
\newcommand{\setdiff}{\backslash}
\newcommand{\R}{\mathds{R}}
\newcommand{\bdry}{\partial}
\newcommand{\spC}{\mathcal{C}}
\newcommand{\Cone}{\Ck1}
\newcommand{\Ck}[1]{\spC^{#1}}
\newcommand{\pd}[1]{\partial_{#1}}
\newcommand{\cli}[2]{{[#1,#2]}}
\newcommand{\boi}[2]{{]#1,#2[}}
\newcommand{\loi}[2]{{]#1,#2]}}
\newcommand{\subeq}[2]{\mathord{\underbrace{\mathop{#1}}_{#2}}}
\newcommand{\supeq}[2]{\mathord{\overbrace{\mathop{#1}}^{#2}}}
\newcommand{\vlen}[1]{|#1|}
\newcommand{\nperp}{\nabla^\perp}
\newcommand{\trace}{\operatorname{tr}}
\newcommand{\half}{\frac12}
\newcommand{\Lap}{\Delta}
\newcommand{\const}{\text{const}}
\newcommand{\csep}{\quad,\quad}
\newcommand{\dotp}{\cdot}
\newcommand{\crossp}{\times}
\newcommand{\hess}{\nabla^2}
\newcommand{\tensor}{\otimes}
\newcommand{\pt}{\partial_t}
\newcommand{\ndiv}{\nabla\dotp}
\newcommand{\ncurl}{\nabla\crossp}
\newcommand{\pglau}{\beta} % beta = sqrt(1-Minf^2)
\newcommand{\vmssonic}{\vms_1}
\newcommand{\Machsmall}{\overline\Mach}
\newcommand{\rhsf}{F}
\newcommand{\sstf}{\overline\stf}
\newcommand{\scl}{\eps}
\newcommand{\densz}{\dens_{\max}}
\newcommand{\scriptn}{^{n}}
\newcommand{\scln}{\scl\scriptn}
\newcommand{\stfn}{\stf\scriptn}
\newcommand{\sstfn}{\sstf\scriptn}
\newcommand{\Machn}{\Mach\scriptn}
\newcommand{\vvn}{\vv\scriptn}
\newcommand{\vvin}{\vvi\scriptn}
\newcommand{\ssndn}{\ssnd\scriptn}
\newcommand{\densn}{\dens\scriptn}
\newcommand{\densin}{\densi\scriptn}
\newcommand{\sstflim}{\sstf}
\newcommand{\Pola}{\Theta}
\newcommand{\isenc}{\gamma}
\newcommand{\polalo}{\pola_0}
\newcommand{\polahi}{\pola_1}
\newcommand{\www}{w}
\newcommand{\wwi}{w_\infty}
\newcommand{\wpot}{\Phi}
\newcommand{\subso}{\underline\stf}
\newcommand{\varstf}{\tilde\stf}
\newcommand{\vxi}{v^x_\infty}
\newcommand{\Ns}{\set{\stf<0}}
\newcommand{\cDom}{\closure\Dom}
\newcommand{\Domi}{\Dom\union\set\infty}
\newcommand{\Dom}{\Omega}
\newcommand{\Slipb}{\bdry\Body}
\newcommand{\Body}{B}
\newcommand{\vx}{v^x}
\newcommand{\vy}{v^y}
\newcommand{\Gam}{\Gamma}
\newcommand{\Dt}{D_t}
\newcommand{\vort}{\omega}
\renewcommand{\Im}{\operatorname{Im}}
\newcommand{\dens}{\varrho}
\newcommand{\vpot}{\phi}
\newcommand{\cpot}{\Phi}
\newcommand{\stf}{\psi}
\newcommand{\piv}{p}
\newcommand{\pif}{\hat\piv}
\newcommand{\ipif}{\pif^{-1}}
\newcommand{\pp}{P}
\newcommand{\ppf}{\hat\pp}
\renewcommand{\vec}[1]{\mathbf{#1}}
\newcommand{\rad}{r}
\newcommand{\vn}{\vec n}
\newcommand{\vs}{\vec s}
\newcommand{\xx}{{\vec x}}
\newcommand{\xxmax}{\xx_+}
\newcommand{\xxmin}{\xx_-}
\newcommand{\pola}{\theta}
\newcommand{\ssnd}{c}
\newcommand{\Mach}{M}
\newcommand{\pr}{\pd\rad}
\newcommand{\po}{\pd\pola}
\newcommand{\px}{\pd x}
\newcommand{\py}{\pd y}
\newcommand{\Machi}{\Mach_\infty}
\newcommand{\vvi}{\vv_\infty}
\newcommand{\ssndi}{\ssnd_\infty}
\newcommand{\densi}{\dens_\infty}
\newcommand{\stfi}{\stf_\infty}
\newcommand{\gdiv}{\vec g}
\newcommand{\hdiv}{\hat\tau}
\newcommand{\vms}{\mu}
\newcommand{\defm}[1]{\emph{#1}}
\newcommand{\vv}{\vec v}
\begin{document}

\title{Subsonic irrotational inviscid flow around certain bodies with two protruding corners}%
\author{Volker Elling}
\date{Dedicated to Robert Finn on the occasion of his 95th birthday}
\maketitle
\begin{abstract}
  We prove non-existence of nontrivial uniformly subsonic inviscid irrotational flows around several classes of solid bodies 
  with two protruding corners, in particular vertical and angled flat plates; horizontal plates are the only case where solutions exists.
  This fills the gap between classical results on bodies with a single protruding corner on one hand and recent work on bodies with three or more protruding corners. 

  Thus even with zero viscosity and slip boundary conditions solids can generate vorticity, in the sense of having at least one rotational but no irrotational solutions. 
  Our observation complements the commonly accepted explanation of vorticity generation based on Prandtl's theory of viscous boundary layers. 
\end{abstract}

\section{Summary}

Consider steady planar flow around a solid body. 
The force exerted by the fluid on the body is a crucial quantity for aero- and hydrodynamics. 
Particularly important are the case of smooth boundaries and the case of bodies with a single corner which is \defm{protruding}, 
meaning the angle through the exterior is greater than $180^\circ$ so that the corner protrudes into the fluid, in contrast to the case of \defm{receding} corners.  
In aerodynamics such a body idealizes a cross-section of an aircraft wing, with the corner representing the trailing edge. 

The classical \defm{Kutta-Joukowsky theory} models this situation with an incompressible irrotational inviscid fluid and a slip condition on the solid boundary.
Given the velocity, there are infinitely many solutions, parametrized by \defm{circulation} $\Gam$, but only one of them satisfies the \defm{Kutta-Joukowsky condition}, 
namely that the velocity is bounded at the corner. Calculating the resulting pressure forces yields a well-known formula for lift, 
component of the force perpendicular to the velocity at infinity. The formula is in reasonable agreement\footnote{%
  see fig.\ 6.7.10 and surrounding text in \cite{batchelor}} with experimental data 
at least in some physical regimes.

The incompressible case requires looking for a harmonic \defm{stream function} satisfying a zero Dirichlet boundary condition;
the gradient rotated is the velocity. 
A natural Hilbert-space approach to such elliptic problems yields existence of solutions in spaces with locally square-integrable gradient. 
At receding corners such gradients are always bounded, but at protruding corners generally not, unless the problem data satisfies a single scalar real constraint\footnote{%
  see the discussion in \cite{elling-hyp2016} for details}. 
Mathematically this is the crucial difference between the two types of corners. 

The compressible subsonic case requires solving a \defm{nonlinear} equation, which is far more difficult. 
Frankl and Keldysh \cite{frankl-keldysh-1934} considered the low-Mach limit;
after seminal work of Morrey \cite{morrey-1938} on the foundations of 2d nonlinear elliptic PDE, 
Shiffman \cite{shiffman-exi-potf}, Bers \cite{bers-exi-uq-potf} and finally Finn and Gilbarg \cite{finn-gilbarg-uniqueness}
were able to give a rather complete subsonic generalization of Kutta-Joukowsky theory. 

In the incompressible case unbounded velocities are merely undesirable, 
but in the compressible case they are impossible,
at least for common pressure laws such as polytropic with isentropic coefficient greater than one. 
To quote Finn and Gilbarg \cite[p.\ 58]{finn-gilbarg-uniqueness}:
\begin{quote}
  ``Unlike the case of incompressible fluids, it appears very likely that in the theory of subsonic flows the Kutta-Joukowsky condition need not be imposed as an added hypothesis,
  but rather is a consequence of the subsonic character of the flow.'' 
\end{quote}
The fluid density reaches zero at a finite \defm{limit speed} and has no sensible definition at higher speeds:
fluid volumes moving to regions of ever lower pressure acquire only a finite velocity from acceleration by the pressure gradients.
A closely related observation:
fluid inside a piston expanding to near-vacuum can only perform a finite amount of mechanical work,
or conversely, it takes only a finite amount of energy to compress a large quasi-vacuum volume of fluid to any positive finite density. 
(These properties are not immediately obvious; they are calculated from the pressure law under consideration and may be false for ``exotic'' pressure laws.)

Bodies with \emph{several} protruding corners appear to have been studied mostly in the incompressible case, 
where conformal mapping techniques allow explicit solutions for a large variety of particular profiles;
less attention has been paid to unbounded velocities since they are tolerated by since incompressible models.
Starting from the prior work of Finn and Gilbarg and earlier authors,
\cite{elling-polylow} considers compressible flow around bounded simple polygons (which have at least three protruding corners), 
showing that there are \emph{no} nontrivial\footnote{A \defm{trivial} flow is $\vv=0$.} 
low-Mach number solutions, regardless of how the corner locations are chosen;
since their coordinates provide ample free parameters, this non-existence may be a bit surprising on the surface. 
The proof works by reduction to the incompressible case, where it is shown that no solution has bounded velocity at every corner.
This statement in turn is reduced to the \defm{utility graph} theorem: 
the bipartite graph $K_{3,3}$, with three vertices connected to each of three other vertices, 
does not have an embedding into the plane; 
the three corners represent one set of vertices while the other set corresponds to the region of positive streamfunction, the region of negative streamfunction
and the body, at which the streamfunction is zero. 
\begin{figure}
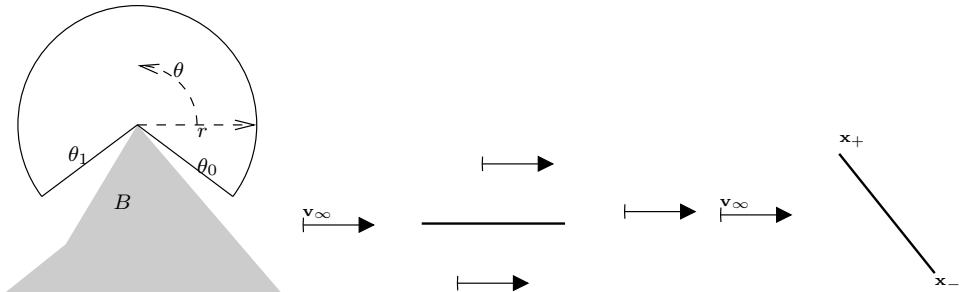

  \input{subsol.pstex_t}%
  \hfil%
  \input{horizontalplate.pstex_t}%
  \hfil%
  \input{diagonalplate.pstex_t}%
  \hfil%
  \caption{Left: a protruding corner in a solid (shaded). Center: horizontal plate; $\vv=\vvi$ is the trivial solution. Right: angled plate.}
  \label{fig:diagonalplate}
\end{figure}

But the proof idea falls well short for profiles with \emph{two} protruding corners, since planar embeddings of the $K_{2,3}$ graph are easy to find. 
Besides, nontrivial flows are easy to find, for example the horizontal plate (fig.\ \ref{fig:diagonalplate} center). 
However, such examples should be considered ``non-generic'': 
given the profile, there is only one free real scalar parameter (circulation), but there are two separate real scalar constraints (from velocity boundedness at each corner); 
for ``generic'' profiles the problem appears overdetermined. 

This heuristic argument is made precise in this article in several ways. 
We prove non-existence of nontrivial compressible uniformly subsonic flows for two (overlapping) classes:
profiles where the corners are the points of maximal and minimal vertical coordinate of the body (Theorem \ref{th:yminmax}),
and profiles that are symmetric across the flow axis, with corners not on the axis (Theorem \ref{th:verticalsymmetry}). 
These two theorems show non-existence for all non-horizontal flat plates, 
as well as many of the \defm{K\'arm\'an-Trefftz} symmetric lenses. 

Additional theorems covering more classes can be imagined, 
but it seems difficult to give a single criterion covering arbitrary profiles. 
For those we focus on the low-Mach case and prove (Theorem \ref{th:smallmach}):
if a profile does not admit nontrivial incompressible flows with bounded velocity, 
then nontrivial compressible flows with Mach numbers below a certain threshold do not exist either. 
The incompressible case is easier to check, for example by conformal mapping techniques;
indeed they allow us to cover all remaining lens cases.

If irrotational subsonic inviscid flows do not exist, there are several alternatives. 
One option is to consider transonic solutions, e.g.\ \defm{supersonic bubbles} at the solid, 
which are commonly observed at smooth outwardly curved boundary parts;
protruding corners can be considered infinite-curvature limits. 
Transition to supersonic is less likely at low Mach numbers; besides, supersonic bubbles generally end in shock waves 
that produce vorticity (mathematically rigorous existence proofs for such bubbles are still a subject of ongoing research). 
Another option is that we may not converge to a steady state as time passes to infinity (consider the von K\'arm\'an vortex strait). 

But the most obvious option is to consider that in reality vorticity is shed from solid surfaces; this is an important mechanism for generating \defm{drag}
in the low-viscosity low-Mach regime. 
Indeed \cite{berg-cpam-1962} has proven existence of subsonic flows around the vertical flat plate (to mention only one case), with vortex sheets emanating downstream from the endpoints.
Hence we have examples of obstacles that do not admit irrotational subsonic flows, but do admit rotational ones. 
In this sense, solids with more than one protruding corner can produce vorticity even with slip boundary conditions and without any viscosity. 

Previously the commonly accepted explanation of vorticity generation has been through the study of thin viscous boundary layers, by Prandtl \cite{prandtl-1922} and others. 
Prandtl's theory is certainly correct and provides more detailed insight; besides, it can explain vorticity generation and flow separation in the absence of corners. 
Simplicity is the main advantage of our alternate explanation: it does not require viscosity, non-slip boundary conditions, let alone details of boundary layers. 

\cite{prandtl-1922} and others already observed that infinite velocities at protruding corners are physically unreasonable. 
Our contribution is mathematical rigor: 
some exceptional shapes do allow irrotational flows with velocity bounded at every corner; 
that a particular body is not exceptional requires proof and that there are no irrotational subsonic flows around the body requires an additional proof.

\section{Background}

\subsection{Isentropic Euler}

The \defm{isentropic Euler} equations are
\begin{alignat*}{7} 
  0 &= \pt\dens + \ndiv(\dens\vv), \\
  0 &= \pt(\dens\vv) + \ndiv(\dens\vv\tensor\vv) + \nabla\pp ,
\end{alignat*} 
where $\vv$ is velocity while pressure $\pp=\ppf(\dens)$ is a strictly increasing function of density $\dens$. 
We only consider the \defm{polytropic} pressure law
\begin{alignat*}{7} \ppf(\dens) = \dens^\isenc \end{alignat*} 
with \defm{isentropic coefficient} $\isenc$ greater than $1$.
Assuming sufficient regularity the equations can be expanded into
\begin{alignat}{7} 
  0 &= \Dt\dens + \dens\ndiv\vv \csep \Dt = \pt + \vv\dotp\nabla,\label{eq:masst}\\
  0 &= \Dt\vv + \dens^{-1}\ppf_\dens(\dens)\nabla\dens.  \label{eq:vpre}\end{alignat} 
Linearizing the equations around 
\begin{alignat*}{7} \dens=\overline\dens=\const>0 \csep \vv=\overline\vv=\const=0 \end{alignat*}
yields 
\begin{alignat*}{7} 0 &= \pt\dens + \overline\dens \ndiv\vv, \\
0 &= 
\pt\vv + \overline\dens^{~-1} 
\ppf_\dens(\overline\dens) \nabla\dens, 
\end{alignat*} 
and subtracting $\ndiv$ of the lower equation from $\pt$ of the upper one yields
\begin{alignat*}{7} 
0 &= (\pt^2-\overline\ssnd^2\Lap)\dens, 
\end{alignat*}
which is the linear wave equation with \defm{sound speed} 
\begin{alignat*}{7} \overline\ssnd \defeq \sqrt{\ppf_\dens(\overline\dens)} \end{alignat*} 

\eqref{eq:vpre} can be rewritten
\begin{alignat}{7} 0 = \Dt\vv + \nabla\piv  \label{eq:v}\end{alignat} 
where the \defm{enthalpy per mass} $\piv=\pif(\dens)$ is defined (up to an additive constant) by
\begin{alignat}{7} \pif_\dens(\dens) = \dens^{-1}\ppf_\dens(\dens) \quad. \label{eq:pp-piv}\end{alignat}

\subsection{Potential flow}

Taking the curl of \eqref{eq:v} eliminates $\nabla\piv$, producing an equation for \defm{vorticity} $\vort=\ncurl\vv=v^y_x-v^x_y$:
\begin{alignat*}{7} 0 = \ncurl\pt\vv + \ncurl(\vv\dotp\nabla\vv) = ... = \Dt\vort + \vort\ndiv\vv \quad. \end{alignat*} 
Combined with \eqref{eq:masst} we obtain the transport equation
\begin{alignat*}{7} 0 = \Dt\frac\vort\dens\quad. \end{alignat*} 
If $\vort=0$ at $t=0$ (and $\dens>0$ throughout), then $\vort=0$ for all time.
There are important reasons to consider nonzero vorticity, 
as we pointed out in the introduction;
to demonstrate this, we explore consequences of assuming it is zero. 

$\ncurl\vv=0$ implies
\begin{alignat}{7} \vv = \nabla\vpot \label{eq:vpot}\end{alignat} 
for a scalar \defm{velocity potential} $\vpot$
(which is locally defined and may be multivalued when extended to non-simply connected domains). 

Henceforth we focus on stationary flow:
\begin{alignat}{7} 0 &= \ndiv(\dens\vv) \label{eq:mass} \quad, \\
0 &= \vv\dotp\nabla\vv+\nabla\piv \quad. \notag \end{alignat} 
Into the latter substitute \eqref{eq:vpot} to obtain\footnote{with $\vv^2=\vv\vv^T$ and $\hess$ the Hessian operator}
\begin{alignat*}{7} 0 = \hess\vpot\nabla\vpot + \nabla(\pif(\dens)) = \nabla\big( \half|\nabla\vpot|^2 + \pif(\dens) \big) \quad. \end{alignat*} 
This implies the \defm{Bernoulli relation}
\begin{alignat}{7} \half|\vv|^2+\pif(\dens) = \text{Bernoulli constant.}  \label{eq:ber}\end{alignat} 
$\pif_\dens(\dens)=\dens^{-1}\ppf_\dens(\dens)=\dens^{-1}\ssnd^2>0$, so $\pif$ is strictly increasing. 
Hence we can solve for
\begin{alignat}{7} \dens = \ipif \big( \text{Bernoulli constant} - \half|\vv|^2 \big) \label{eq:pividens}\end{alignat} 
for some maximal interval of $|\vv|$ closed at its left endpoint $0$; density $\dens$ reaches a maximum $\densz$ at $\vv=0$.

Substituting \eqref{eq:pividens} into \eqref{eq:mass} yields a second-order scalar differential equation for $\vpot$ called \defm{compressible potential flow}. 
After differentiation it is equivalent to\footnote{with Frobenius product $A:B=\trace(A^TB)$; note $A:\vec w^2=\vec w^TA\vec w$}
\begin{alignat}{7} 0 = \big(I-(\frac{\vv}{\ssnd})^2\big):\hess\vpot = \big(1-(\frac{v^x}{\ssnd})^2\big)\vpot_{xx} - 2\frac{v^x}{\ssnd}\frac{v^y}{\ssnd}\vpot_{xy} 
+ \big(1-(\frac{v^y}{\ssnd})^2\big)\vpot_{yy} \label{eq:comp-potf}\end{alignat} 
where $\ssnd$ is a function of $\dens$, hence of $\vv=\nabla\vpot$. 
The eigenvectors of the coefficient matrix $I-(\vv/\ssnd)^2$ are $\vv$ and\footnote{$\perp$ counterclockwise rotation by $\pi/2$} $\vv^\perp$, 
with eigenvalues $1-\Mach^2$ and $1$ where 
\begin{alignat*}{7} \Mach \defeq \vlen\vv/\ssnd \end{alignat*} 
is the \defm{Mach number}. 
Hence \eqref{eq:comp-potf} is elliptic in a given point if and only if 
\begin{alignat*}{7} \Mach < 1 \quad, \end{alignat*} 
i.e.\ if and only if velocity $|\vv|$ is below the speed of sound $\ssnd$; such flows are called \defm{subsonic}. 

A \defm{uniformly subsonic} flow has $\Mach\leq 1-\delta$ for some constant $\delta>0$ independent of $\xx$. 
Many classical results have been extended to the non-uniformly subsonic case, 
but in this article we prefer brevity over a slight improvement in generality.

\subsection{Streamfunction formulation}

We will need an alternative formulation of irrotational flow, which is obtained as follows: $\ndiv(\dens\vv)=0$
implies\footnote{with $\nabla^\perp=(-\partial_y,\partial_x)$}
\begin{alignat}{7} \dens\vv = -\nperp\stf \label{eq:rhovstf}\end{alignat} 
for a scalar \defm{stream function} $\stf$. $\vv\dotp\nabla\stf=0$ means that (except in \defm{stagnation points}, i.e.\ $\vv=0$)
the level sets of stream functions, called \defm{streamlines}, are integral curves of $\vv$, i.e.\ macroscopic trajectories of the fluid particles. 

Consider the Bernoulli relation \eqref{eq:ber} in the form
\begin{alignat}{7} \text{Bernoulli constant} = \subeq{\supeq{\half|\dens\vv|^2}{\vms}~\dens^{-2}+\pif(\dens) }{\eqdef \rhsf(\dens,\vms)} \quad \label{eq:berstf} \end{alignat} 
and apply the implicit function theorem. 
At solutions $(\dens,\vms)$ of \eqref{eq:berstf} 
that are vacuum-free and subsonic,
\begin{alignat*}{7} 
\frac{\partial\rhsf}{\partial\vms}
&=
\dens^{-2} > 0 \quad\text{and}
\\
\frac{\partial\rhsf}{\partial\dens}
&=
-\dens^{-3}|\dens\vv|^2+\pif_\dens(\dens)
=
\dens^{-1}(c^2-|\vv|^2) > 0 \quad,
\end{alignat*} 
so we obtain a solution
\begin{alignat}{1} \frac1\dens = \hdiv(\vms) \label{eq:hdiv}\end{alignat} 
for a strictly increasing function $\hdiv$ defined for $\vms$ in some maximal interval $\cli{0}{\vmssonic}$ 
for some constant $\vmssonic\in\loi0\infty$; 
for $|\vms|=\vmssonic$ the velocity is exactly sonic. 

Having solved the mass and Bernoulli equations it remains to ensure irrotationality\footnote{which is needed to recover the original velocity equation from the Bernoulli relation}:
\begin{alignat}{7} 0 = \ncurl \vv = \ncurl \frac{-\nperp\stf}{\dens} = -\ndiv\Big( \subeq{ \hdiv(\frac{|\nabla\stf|^2}{2}) \nabla\stf }{\eqdef\gdiv(\nabla\stf)} \Big) \label{eq:stf-divform} \end{alignat} 

Assuming sufficient additional regularity, differentiation yields after some calculation that
\begin{alignat}{7} 
0
&=
\big(1-(\frac{\vv}{\ssnd})^2\big):\hess\stf 
\notag\\&= 
\big(1-(\frac{v^x}{\ssnd})^2\big)\stf_{xx} - 2\frac{v^xv^y}{\ssnd^2}\stf_{xy} + \big(1-(\frac{v^y}{\ssnd})^2\big)\stf_{yy} 
\label{eq:stf-2d}
\end{alignat} 
which has the same coefficient matrix as \eqref{eq:comp-potf}; again it is elliptic if and only if the flow is subsonic. 

The incompressible limit of \eqref{eq:stf-2d} is obtained by (for example) considering sequences of solutions with velocities approaching $0$,
and hence (with fixed Bernoulli constant) sound speed converging to a positive constant. 
Correspondingly the ``Mach number'' of incompressible solutions is considered to be $0$.
In the limit we obtain
\begin{alignat*}{7} 0 &= -\Lap\stf \end{alignat*}
which is \eqref{eq:stf-divform} with $\hdiv=\const>0$. 
This can also be obtained (see e.g.\ \cite{klainerman-majda-singular}) along similar lines as for compressible flow from the unsteady incompressible Euler equations 
\begin{alignat*}{7} 0 &= \ndiv\vv, \\
0 &= \Dt\vv + \nabla\piv;
\end{alignat*} 
here $\dens=\const$, and $\piv$ is not a function of $\dens$ but rather a separate unknown making the second equation divergence-free.

Incompressible potential flows correspond to harmonic functions; 2d harmonic functions are conveniently represented by holomorphic functions of a single variable. 
To this end it is customary to consider the \defm{complex velocity}
\begin{alignat*}{5}
  \www &\defeq \vx-i\vy 
\end{alignat*}
as a function of
\begin{alignat*}{7}  
z &\defeq x+iy. 
\end{alignat*} 
Then 
\begin{alignat*}{5}
  \pd z^* \www = \half(\px+i\py)(\vx-i\vy)
  &= \half( \ndiv\vv - i\ncurl\vv ) .
\end{alignat*}
Hence $\www$ represents an \emph{incompressible} and \emph{irrotational} flow if and only if $\www$ is holomorphic. 

If so, it is convenient to use the \defm{complex velocity potential} $\wpot=\int^z\www~dz$; the lower endpoint of the integral is fixed (changing it only adds a constant);
the path does not matter locally since $\www$ is holomorphic, but for non-simply connected domains $\wpot$ may be multivalued
(the simplest example being the \defm{point vortex} $\wpot=\frac1{2\pi i}\log z$ which has multi-valued $\vpot$,
but corresponds to the single-valued $\vv=(2\pi)^{-1}|\xx|^{-2}(-y,x)$).
\begin{alignat*}{5}
  \wpot = \vpot + i\stf 
\end{alignat*}
is also holomorphic, satisfying Cauchy-Riemann equations
\begin{alignat*}{5}
  \vpot_x = \stf_y \csep \vpot_y = -\stf_x ,
\end{alignat*}
which yield
\begin{alignat*}{5}
  \vv &= \begin{bmatrix}
    \vx  \\ \vy
  \end{bmatrix} &&= \nabla\vpot = -\nperp\stf.
\end{alignat*}

\subsection{Slip condition}

At solid boundaries we use the standard \defm{slip condition}
\begin{alignat}{7} 0 = \vn \dotp \vv\quad, \label{eq:slip}\end{alignat} 
where $\vn$ is a normal to the solid. 
In the stream function formulation:
\begin{alignat*}{7} 0 = \vs \dotp \nabla\stf \quad, \end{alignat*} 
where $\vs$ is a tangent to the solid. 
In the latter case integration along connected components of (say) a piecewise $\Cone$ boundary yields 
\begin{alignat}{7} \Im\wpot = \stf = \const \label{eq:stf-const} \quad. \end{alignat} 
If the solid boundary has a single connected component, 
then we may add an arbitrary constant to $\stf$ without changing $\vv=-\nperp\stf$ to obtain the convenient zero Dirichlet condition
\begin{alignat*}{5}
  \Im\wpot = \stf = 0.
\end{alignat*}

\section{Assumptions, regularity and expansion at infinity}

We assume that $\Body$, the solid body, is a bounded closed set with boundary $\Slipb$ a continuous curve composed of finitely many
infinitely differentiable segments 
(for the sake of exposition; far less is needed);
$\Dom=\R^2\setdiff\Body$ is the fluid domain. 

We say $\xx\in\Slipb$ is a \defm{protruding corner} (fig.\ \ref{fig:diagonalplate} left) 
if there is a circular sector $S$ with radius $\eps>0$ centered in $\xx$
covering an angle $\delta>\pi$ so that the interior of $S$ is contained in $\Dom$. 

We consider $\stf$ that are infinitely differentiable in $\Dom$ and also at smooth points of $\Slipb$. 
This is no limitation: assume $\stf$ is in the most general possible class of stream functions, 
namely $\stf\in\woba1\infty(\Dom)$ (the space of functions with distributional derivatives that are essentially bounded functions), 
satisfying
\begin{alignat*}{7} \esssup_{\Dom} \half|\nabla\stf|^2 < \vmssonic \end{alignat*} 
(which is equivalent to $\esssup_{\Dom}\Mach < 1$),
and satisfying \eqref{eq:stf-divform} in the distributional sense, i.e.
\begin{alignat}{7} 
  0 = \int_{\Dom} \gdiv(\nabla\stf)\dotp\nabla\vartheta~d\xx 
  \label{eq:compweak}
\end{alignat} 
for every smooth function $\vartheta$ with compact support in $\Dom$, 
as well as the slip condition\footnote{for our domains $\woba1\infty$ has well-defined trace on the boundary} $\stf=0$ on $\Slipb$. 
In the compressible $\isenc>1$ case this is the largest reasonable class; 
as discussed in the introduction $\dens$ is not defined if $\vv$, or equivalently $\nabla\stf$, are unbounded.
Morrey estimates show that $\nabla\stf$ is $\Ck{0,\alpha}$ in $\Dom$, at $\Cinf$ parts of the boundary and also ``at infinity''
(after mapping it to $0$ by a change of coordinates $\vec a=\xx/|\xx|^2$). 
Then Schauder estimates and bootstrapping improve the regularity to our assumptions 
(see \cite{morrey-1938,bers-exi-uq-potf,finn-gilbarg-uniqueness}, 
\cite[Section 4]{elling-protrudingangle}, \cite[Chapter 6 and 12]{gilbarg-trudinger}),
in fact analyticity since $\ppf$ is analytic \cite{bernstein-1904,morrey-analyticity-i}.
(Although the classical work proves existence of such $\stf$, 
the statements of the uniqueness theorems do not clarify whether all essentially bounded $\vv$ are considered 
or merely those that are sufficiently smooth.)

In our work \cite{elling-polylow} on simple polygons, which have at least three protruding corners, we only needed 
an asymptotic expansion 
\begin{alignat*}{7} \vv = (\vxi,0) + o(1) .  \end{alignat*} 
However, it appears that 
the case of \emph{two} protruding corners ---- similar to the classical case of one --- 
requires a deeper expansion (see \cite[Section 4 and 5]{finn-gilbarg-uniqueness};
the expansion already appears in \cite{ludford-1951,bers-exi-uq-potf} and other works in varying degrees of detail and rigour):
\begin{alignat}{7} (\vx,\vy) = (\vxi,0) + \frac{\Gam}{2\pi} \frac{\pglau(-y,x)}{x^2+\pglau^2y^2} + O(|\xx|^{-1-\delta}) \label{eq:vv-expansion}\end{alignat} 
where $\delta>0$ is small, $\vvi=(\vxi,0)$ with $\vxi\geq0$ is the velocity at infinity, $\ssndi$ the corresponding sound speed, 
$\Machi=|\vvi|/\ssndi$ the Mach number and
\begin{alignat*}{7} \pglau = \sqrt{1-\Machi^2} \end{alignat*} 
the \defm{Prandtl-Glauert} factor. 
This expansion is obtained using $\Ck{0,\alpha}$ regularity at infinity 
which permits Schauder estimates for a first-order elliptic system
for $\nabla_\xx\stf$ in $\vec a$ coordinates. Interested readers can find the details in \cite{finn-gilbarg-uniqueness},
while others may wish to merely assume the expansion, or simply consider the incompressible case where 
taking $\Machi=0$ recovers the familiar Laurent expansion 
\begin{alignat*}{7} \www = \wwi + \frac{\Gam}{2\pi i} \frac1z + O(|z|^{-1-\delta}). \end{alignat*} 

We will need an expansion for $\stf$ which does not seem to be given explicitly in the classical papers: 
\begin{proposition}
  \begin{alignat}{7} \stf = \densi \big( \vxi y  - \frac{\Gam}{2\pi} \pglau \log\sqrt{x^2+\pglau^2y^2} \big) + \const + O(|\xx|^{-\delta}) 
    \label{eq:stf-asy}\end{alignat} 
  where $O$ has $O(|\xx|^{-1-\delta})$ gradient. 
\end{proposition}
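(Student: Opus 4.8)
The plan is to integrate the velocity expansion \eqref{eq:vv-expansion}, using the relation $\dens\vv = -\nperp\stf$ from \eqref{eq:rhovstf}, and to control the density factor via the Bernoulli relation. First I would observe that $\nperp\stf = -\dens\vv$, so $\stf_x = \dens\vy$ and $\stf_y = -\dens\vx$ (up to the sign convention fixed by $\nperp=(-\pd_y,\pd_x)$). From \eqref{eq:vv-expansion} the velocity is $\vvi$ plus a point-vortex-type term plus $O(|\xx|^{-1-\delta})$; since $\vv\to\vvi$ as $|\xx|\to\infty$, the Bernoulli relation \eqref{eq:ber} together with the strict monotonicity of $\pif$ forces $\dens\to\densi$, and more quantitatively $\dens = \densi + O(|\vv|^2-|\vvi|^2) = \densi + O(|\xx|^{-1})$ because the vortex term is $O(|\xx|^{-1})$ and its square and cross term with $\vvi$ are $O(|\xx|^{-1})$ (the cross term dominates). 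So $\dens\vv = \densi\vv + (\dens-\densi)\vv = \densi\vvi + \densi\cdot(\text{vortex term}) + O(|\xx|^{-1-\delta}) + O(|\xx|^{-1})\cdot O(1)$ — here I must be careful: the naive bound on $(\dens-\densi)\vv$ is only $O(|\xx|^{-1})$, which is \emph{not} $o$ of the vortex term, so a more refined cancellation is needed. The key point will be that $\dens$ depends on $|\vv|^2$ alone, and $|\vv|^2 = |\vvi|^2 + 2\vvi\dotp(\text{vortex term}) + O(|\xx|^{-1-\delta})$; the vortex term $\tfrac{\Gam}{2\pi}\tfrac{\pglau(-y,x)}{x^2+\pglau^2 y^2}$ is \emph{rotational} about the origin, so its inner product with the fixed vector $\vvi=(\vxi,0)$ is $\tfrac{\Gam\pglau}{2\pi}\tfrac{-\vxi y}{x^2+\pglau^2y^2}$, which is $O(|\xx|^{-1})$ but with a specific odd structure. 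Thus $(\dens-\densi)$ itself has a leading $O(|\xx|^{-1})$ part of this specific form, and $(\dens-\densi)\vv$ has an $O(|\xx|^{-1})$ part whose curl-free piece can be absorbed into the $\vxi y$ and $\log$ terms after integration, leaving a genuine $O(|\xx|^{-1-\delta})$ remainder in $\nperp\stf$; I would make this precise by Taylor-expanding $\idens = \hdiv$ (equivalently $1/\dens$ as a function of $\vms=\half|\dens\vv|^2$, or more directly $\dens$ as a function of $|\vv|^2$ via \eqref{eq:pividens}) to first order and tracking the linear term explicitly.

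Granting that $\nperp\stf = \nperp\big(\densi\vxi y - \tfrac{\densi\Gam}{2\pi}\pglau\log\sqrt{x^2+\pglau^2y^2}\big) + O(|\xx|^{-1-\delta})$ as a vector field, the next step is to integrate. One checks directly that $-\nperp\big(\vxi y\big) = (\vxi,0)=\vvi$ (since $\nperp(\vxi y)=(-\vxi,0)$), and that $-\nperp$ of the logarithm reproduces exactly the point-vortex velocity $\tfrac{1}{2\pi}\tfrac{\pglau(-y,x)}{x^2+\pglau^2y^2}$ after the substitution $x\mapsto x$, $y\mapsto \pglau y$ — this is the elementary computation $\nperp\log\sqrt{x^2+\pglau^2 y^2} = \tfrac{(-\pglau^2 y,\, x)}{x^2+\pglau^2 y^2}$, so $-\nperp\big(\pglau\log\sqrt{\cdots}\big) = \tfrac{\pglau(-\,\pglau^2 y,x)}{x^2+\pglau^2y^2}$; I would double-check the constants and the anisotropic scaling here, as this is exactly where the Prandtl--Glauert factor enters and sign/scaling slips are easy. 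Then $\stf$ minus the claimed explicit expression has gradient $O(|\xx|^{-1-\delta})$ on $\Dom$ (outside a large ball), hence is a $\Ck1$ function whose gradient is integrable along rays to infinity; integrating $\nabla(\text{error})$ from a base point along a path going to infinity shows the error converges to a constant with rate $O(|\xx|^{-\delta})$, which after absorbing that limiting constant into $\const$ gives \eqref{eq:stf-asy} with the stated gradient bound $O(|\xx|^{-1-\delta})$.

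The main obstacle is the density correction: showing that $(\dens-\densi)\vv$, which is only $O(|\xx|^{-1})$ in absolute size and hence a priori the same order as the vortex term we want to isolate, actually contributes only an \emph{additional} curl-free field of the same logarithmic/linear type plus a true $O(|\xx|^{-1-\delta})$ remainder. Equivalently: the $O(|\xx|^{-1})$ part of $\dens\vv$ beyond $\densi\vvi$ must be exactly $\densi$ times the $O(|\xx|^{-1})$ part of $\vv$ — i.e.\ the first-order density correction, when multiplied by the \emph{zeroth}-order velocity $\vvi$, still lands in the span of $\nperp(\text{linear})$ and $\nperp(\log)$. This works because both $\dens\vv$ and $\vv$ are divergence-free and curl-free respectively in the relevant sense, so their $O(|\xx|^{-1})$ far-field parts are governed by the same (linearized, anisotropic) Laplace-type structure; one clean way to organize this is to linearize the stream-function equation \eqref{eq:stf-2d} at infinity, note its principal part is $\pglau^2\stf_{xx}+\stf_{yy}$ (whose fundamental solution is $\log\sqrt{x^2+\pglau^2y^2}$), and identify the leading far-field term of $\stf-\densi\vxi y$ with a multiple of that fundamental solution, the multiple being pinned down by matching to the circulation $\Gam$ in \eqref{eq:vv-expansion}. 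I would present the argument in whichever of these two equivalent forms (direct integration with careful density bookkeeping, versus linearized-PDE far-field analysis) reads most cleanly, but either way the crux is the bookkeeping that prevents the $O(|\xx|^{-1})$ density error from polluting the leading term, and the cleanest check is that $\densi\vxi = \densi v^x_\infty$ and $\densi\tfrac{\Gam}{2\pi}\pglau$ are precisely the coefficients making $-\nperp\stf$ agree with \eqref{eq:vv-expansion} through order $|\xx|^{-1}$.
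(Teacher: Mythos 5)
Your first route (direct integration with a first-order Taylor expansion of $\dens=\ipif(\text{Bernoulli}-\half|\vv|^2)$ around $\vvi$, tracking the linear density correction so that the $O(|\xx|^{-1})$ coefficient of $\dens\vv$ becomes exactly the gradient of $\densi(\vxi y-\frac{\Gam}{2\pi}\pglau\log\sqrt{x^2+\pglau^2y^2})$, then integrating the $O(|\xx|^{-1-\delta})$ remainder) is precisely the paper's proof, and you correctly identified the crux — that $(\dens-\densi)\vvi$ is the same order as the vortex term and must be combined with it via the identity $\ssndi^{-2}(\vxi)^2=1-\pglau^2$. The only quibble is your closing ``cleanest check'': $-\nperp\stf=\dens\vv$ does \emph{not} agree with $\densi$ times \eqref{eq:vv-expansion} through order $|\xx|^{-1}$ (the $x$-component of the log-gradient carries $\pglau^2y$ rather than $y$ in the numerator), which is exactly the discrepancy the density correction supplies, as your main argument already recognizes.
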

\begin{proof}
  We expand the right-hand side of
  \begin{alignat*}{7} \nabla\stf \topref{eq:rhovstf}{=} \dens\vv^\perp \end{alignat*}
  as follows:
  \begin{alignat}{7} \dens &\topref{eq:pividens}{=} \ipif(\text{Bernoulli constant}-\half|\vv|^2) 
    \notag\intertext{(expand at $\vv=\vvi$)}
  &= \ipif(...) - (\ipif)'(...)\vvi\dotp(\vv-\vvi) + O(|\vv-\vvi|^2)
  \notag\\&\topref{eq:vv-expansion}{=} \densi - \densi \ssndi^{-2} (\vxi,0) \dotp \big( \frac{\Gam}{2\pi} \frac{\pglau(-y,x)}{x^2+\pglau^2y^2} + O(|\xx|^{-1-\delta}) \big) + O(|\xx|^{-2})
  \notag\\&\overset{\ssndi^{-2}(\vxi)^2=1-\pglau^2}{=} 
  \densi + \densi \pglau \frac{\Gam}{2\pi} \frac{(1-\pglau^2) y}{x^2+\pglau^2y^2} \frac1{\vxi} + O(|\xx|^{-1-\delta})
  \label{eq:rhoexp}
  \end{alignat} \begin{alignat*}{7} 
  \stf_x 
  =
  -\dens\vy
  &\toprefb{eq:vv-expansion}{eq:rhoexp}{=}
  \densi \pglau \frac{\Gam}{2\pi} \frac{-x}{x^2+\pglau^2y^2} + O(|\xx|^{-1-\delta})
   \end{alignat*} \begin{alignat*}{7} 
  \stf_y
  =
  \dens\vx
  &\toprefb{eq:vv-expansion}{eq:rhoexp}{=}
  \densi
  \Big(
  \vxi 
  + \pglau \frac{\Gam}{2\pi} \frac{ - \pglau^2 y }{x^2+\pglau^2y^2}  
  \Big)
  + O(|\xx|^{-1-\delta})
  \end{alignat*} 
  The right-hand sides without the $O$ parts are the gradient of 
  \begin{alignat*}{7} \densi \big( \vxi y  - \frac{\Gam}{2\pi} \pglau \log\sqrt{x^2+\pglau^2y^2} \big) \quad; \end{alignat*} 
  integrating the $O(|\xx|^{-1-\delta})$ term from some fixed point along some path toward infinity yields $O(|\xx|^{-\delta})$
  plus an integration constant.
\end{proof}

\section{Subsolution}

\newcommand{\axx}{a^{xx}}
\newcommand{\axy}{a^{xy}}
\newcommand{\ayy}{a^{yy}}
\newcommand{\maxr}{\overline r}
\newcommand{\subsocoeffeps}{\delta}
\newcommand{\subsoexpeps}{\eps}
\newcommand{\polamid}{\pola_+}

The Laplace equation admits solutions $\stf(\rad,\pola)=\rad^\alpha\sin(\alpha\pola)$ 
that satisfy a zero Dirichlet condition on rays at angles $\pola=0$ and $\Pola=\pi/\alpha$; 
protruding corners have $\Pola\in\boi{\pi}{2\pi}$ so that $\alpha=\pi/\Pola<1$
meaning $\nabla\stf\sim\rad^{\alpha-1}$ is unbounded. 
The Laplace operator can be used after showing that $\vv\conv 0$ at corners,
but that is not necessary because subsolutions can be obtained for general operators:
\begin{proposition}
  \label{prop:subso}%
  Consider a differential operator\\
  \begin{alignat*}{7} L = -A(\xx):\nabla^2 = -\axx(\xx)\pd x^2 - 2\axy(\xx)\pd x\pd y - \ayy(\xx)\pd y^2 \end{alignat*}
  that is uniformly elliptic on a sector $\set{\polalo<\pola<\polahi,\ 0<\rad<\maxr}$ 
  (with $\maxr>0$, $\polahi-\polalo\leq 2\pi$) in polar coordinates $(\rad,\pola)$.  
  If the sector angle $\polahi-\polalo$ is greater than $\pi$,  
  then there exists a subsolution $\subso$ which is $\Ctwo$ in the sector and continuous on its closure with
  \begin{alignat}{7} 
  L\subso &\leq 0 \quad\text{in the sector, and} \notag \\
  \subso &\leq 0 \quad\text{on the radii $\set{\pola=\pola_q,\ 0\leq\rad\leq\maxr}$ for $q=0,1$,} \label{eq:zeroonradii}
  \end{alignat}
  so that 
  \begin{alignat}{7} \subso &\geq \rad^{1-\eps} \quad\text{on some ray $\set{\pola=\polamid,\ 0\leq\rad\leq\maxr}$}  \label{eq:infgrad}\end{alignat} 
  for some constants $\eps\in\boi01$ and $\polamid\in\boi{\polalo}{\polahi}$.
\end{proposition}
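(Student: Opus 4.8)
The plan is to produce an explicit homogeneous subsolution $\subso(\rad,\pola)=\rad^{1-\eps}\Phi(\pola)$, mimicking the Laplace model $\rad^\alpha\sin(\alpha\pola)$ but with the exponent $\alpha=1-\eps$ taken close to $1$ and the angular factor $\Phi$ a small perturbation of $\sin\!\big(\tfrac\pi\Theta(\pola-\polalo)\big)$, where $\Theta:=\polahi-\polalo>\pi$ so that $\beta:=\pi/\Theta\in(0,1)$.

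First I would carry out the (routine) computation of $A(\xx):\hess\big(\rad^\alpha\Phi(\pola)\big)$ in polar coordinates. Introducing the unit vectors $\vec r_\pola=(\cos\pola,\sin\pola)$, $\vec s_\pola=(-\sin\pola,\cos\pola)$ and the scalars $P=\vec s_\pola^{\,T}A\vec s_\pola$, $R=\vec r_\pola^{\,T}A\vec r_\pola$, $Q=\vec r_\pola^{\,T}A\vec s_\pola$ (all functions of $\xx$), this yields
\[ A:\hess(\rad^\alpha\Phi)=\rad^{\alpha-2}\Big[P\,\Phi''+2(\alpha-1)Q\,\Phi'+\alpha\big(P+(\alpha-1)R\big)\Phi\Big]. \]
Since $\left(\begin{smallmatrix}R&Q\\ Q&P\end{smallmatrix}\right)$ is $A$ conjugated by a rotation, uniform ellipticity forces $P,R\in[\lambda,\Lambda]$ and $|Q|\le\Lambda$ pointwise, where $0<\lambda\le\Lambda$ are the ellipticity constants of $L$. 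Hence it is enough to choose $\alpha\in\boi01$ and $\Phi$ so that the bracket is $\ge0$ for every $\pola\in\boi{\polalo}{\polahi}$ and every such triple $(P,Q,R)$, while $\Phi\le0$ at $\pola\in\{\polalo,\polahi\}$ and $\Phi\ge1$ on the bisector ray $\polamid:=\polalo+\tfrac\Theta2$.

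For the profile I would take $\Phi(\pola)=\sin(\beta(\pola-\polalo))+\eta\sin^2(\beta(\pola-\polalo))$ with a small $\eta>0$. The boundary and bisector requirements are immediate, and $\Phi\ge0$ throughout. The crucial elementary estimate is that, writing $u:=\sin(\beta(\pola-\polalo))\in[0,1]$,
\[ \Phi''+\Phi=(1-\beta^2)u+\eta\big(2\beta^2+(1-4\beta^2)u^2\big)\ge 2\eta\beta^2>0 \qquad\text{once }\eta<(1-\beta^2)/6 \]
(the right-hand side is increasing in $u$ there). Writing $\alpha=1-\eps$ and regrouping, the bracket equals $P(\Phi''+\Phi)-\eps\big(2Q\Phi'+(R+P)\Phi\big)+\eps^2R\Phi\ge 2\lambda\eta\beta^2-8\eps\Lambda$, using $|\Phi'|,\Phi\le2$. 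Setting $\eta:=\sqrt\eps$ and then taking $\eps>0$ small enough that $\eps<1-\beta^2$, $\sqrt\eps<(1-\beta^2)/6$ and $\eps<(\lambda\beta^2/4\Lambda)^2$, the bound is positive, so $\subso:=\rad^{1-\eps}\Phi$ is the desired subsolution: it is $\Cinf$ for $\rad>0$ and extends continuously by $0$ at $\rad=0$ because $1-\eps>0$, it vanishes on the two radii, it equals $\rad^{1-\eps}(1+\eta)\ge\rad^{1-\eps}$ on $\{\pola=\polamid\}$, and $L\subso=-\rad^{-1-\eps}[\text{bracket}]\le0$.

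The one genuinely delicate point is the behaviour right up to the two bounding radii: there $\Phi$ and $\Phi''$ both vanish while $\Phi'$ does not, and the sign of the cross term $2(\alpha-1)Q\Phi'$ is not controlled by ellipticity, so the bare choice $\Phi=\sin(\beta(\cdot))$ actually fails in a neighbourhood of $\polalo,\polahi$. The role of the $\eta\sin^2$ correction is precisely to make $\Phi''+\Phi$ bounded below by a \emph{positive} constant (equivalently $\Phi''(\polalo),\Phi''(\polahi)>0$), and the scaling $\eta=\sqrt\eps$ is chosen so that this $O(\sqrt\eps)$ gain strictly dominates the $O(\eps)$ cross term uniformly on the whole sector, including arbitrarily close to the radii. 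Everything else is bookkeeping with the constants $\lambda,\Lambda,\beta$.
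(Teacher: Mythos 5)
Your proposal is correct and follows essentially the same route as the paper: a homogeneous subsolution $\rad^{1-\eps}\Phi(\pola)$ whose angular factor makes the $\eps$-independent part $P(\Phi''+\Phi)$ of the rotated-frame expression uniformly positive, after which taking $\eps$ small absorbs the remaining terms via the ellipticity bounds. The only difference is the choice of angular profile: the paper simply solves $u+u''=1$, taking $u=1+a\cos(\pola-\polamid)$ with $a$ large enough that its zeros lie at or outside the bounding radii, so the principal term is identically $1$ and no coupling like your $\eta=\sqrt{\eps}$ is needed; your perturbed sine works as well but at the cost of that extra bookkeeping.
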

\begin{proof}
  Ansatz:
  \begin{alignat*}{7} \subso(\rad,\pola) &= \rad^{1-\eps} u(\pola). \end{alignat*} 
  At $\pola=0$, $L\subso\leq 0$ is 
  \begin{alignat*}{7} 0 
  &\leq 
  \big( \axx\pr^2 + 2\axy\rad^{-1}(\pr-\rad^{-1})\po + \ayy(\rad^{-2}\po^2+\rad^{-1}\pr) \big) \subso
  \\&=
  \rad^{-1-\eps} \Big( \ayy ( u + u_{\pola\pola} ) -\eps \big( \axx (1-\eps) u  + 2\axy u_\pola + \ayy u \big) \Big)
  \end{alignat*} 
  and same at other $\pola$ if the $A$ coefficients are rotated accordingly. 
  To satisfy the inequality it is sufficient to solve $u+u_{\pola\pola}=1$ 
  and then take $\eps>0$ small, 
  using $|\axx|,|\axy|\leq C\ayy$ 
  for some constant $C<\infty$, by uniform ellipticity. 
  The solutions are
  \begin{alignat*}{7} u=1+a\cos(\pola-\polamid) \end{alignat*}
  where $a\geq 0$ is the amplitude and $\polamid$ the location of the maximum;
  the zeros on each side are at a distance $\arccos\frac{-1}{a}=\pi-\arccos\frac1a$ which, as $a$ ranges from $1$ to $\infty$, ranges from $\pi$ to arbitrarily close to but larger than $\pi/2$.
  Hence with $\polamid=\half(\polalo+\polahi)$ 
  we can satisfy \eqref{eq:zeroonradii} and \eqref{eq:infgrad}.
\end{proof}

\begin{proposition}
  \label{prop:singlesign}%
  A protruding corner $\xx$ does not have a neighbourhood $U$ so that $\stf\geq 0$ (or $\stf\leq 0$) in $U\isect\Dom$. 
  More generally, if some $\varstf\in\Ctwo(\Dom)\isect\woba1\infty(\Dom)$ solves $A(\xx):\nabla^2\varstf=0$ in $U\isect\Dom$ 
  for bounded and uniformly positive definite $A$, as well as $\varstf=0$ in the corner, 
  then $\varstf\geq 0$ on $U\isect\Dom$ means $\nabla\varstf$ is not bounded. 
\end{proposition}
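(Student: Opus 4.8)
The plan is to derive a contradiction from the assumption that $\varstf\geq 0$ on $U\isect\Dom$ while $\nabla\varstf$ is bounded, by comparing $\varstf$ against the subsolution $\subso$ furnished by Proposition \ref{prop:subso}. First I would fix a small radius $\maxr>0$ so that the circular sector $S=\set{\polalo<\pola<\polahi,\ 0<\rad<\maxr}$ centered at the corner $\xx$ lies inside $\Dom$ (this is exactly the defining property of a protruding corner, with sector angle $\polahi-\polalo=\Pola>\pi$); after translating, take $\xx=0$. The coefficient matrix $A$ is uniformly elliptic on $S$ by hypothesis, so Proposition \ref{prop:subso} gives a function $\subso$, $\Ctwo$ on $S$ and continuous on $\closure S$, with $L\subso=-A:\nabla^2\subso\leq 0$ in $S$, $\subso\leq 0$ on the two bounding radii, and $\subso\geq\rad^{1-\eps}$ on the middle ray $\set{\pola=\polamid}$ for some $\eps\in\boi01$.

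The main step is a comparison argument on $S$. Since $\varstf$ is continuous on $\closure\Dom$ (by the regularity assumptions collected earlier), bounded, vanishes at the corner $0$, and is nonnegative on $U\isect\Dom\supseteq S$ after shrinking $\maxr$, I want to show $\varstf\geq \lac{} \subso$ on $S$ for a suitable small constant $\lac{}>0$ — equivalently that $\varstf$ dominates a definite multiple of $\subso$, which near the corner grows like $\rad^{1-\eps}$ with derivative $\rad^{-\eps}\to\infty$. Consider $w=\varstf-\lac{}\subso$. On the arc $\set{\rad=\maxr}$, $\varstf$ is bounded below by some positive constant minus what the corner value forces; more carefully, because $\subso$ is bounded on $\closure S$, choosing $\lac{}$ small makes $\lac{}\subso\leq\varstf$ on the arc (using that $\varstf>0$ on the compact arc, or at worst handling the stagnation set separately). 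On the two radii, $\subso\leq 0\leq\varstf$, so $w\geq 0$ there. At the corner, both $\varstf$ and $\subso$ vanish, so $w\to 0$. Since $A:\nabla^2 w=A:\nabla^2\varstf-\lac{}A:\nabla^2\subso\geq 0$ in $S$, $w$ is a supersolution of a uniformly elliptic operator with no zeroth-order term; the maximum principle (applied to $-w$, with the corner point handled by the continuity of $w$ up to $0$ and $w(0)=0$) gives $w\geq 0$ throughout $S$, i.e.\ $\varstf\geq\lac{}\subso$ on $S$.

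The contradiction then follows along the middle ray: $\varstf(\rad,\polamid)\geq\lac{}\subso(\rad,\polamid)\geq\lac{}\rad^{1-\eps}$ for $0<\rad\leq\maxr$, while $\varstf(0)=0$; since $\lac{}\rad^{1-\eps}$ has unbounded derivative as $\rad\dnconv 0$, the difference quotient $\varstf(\rad,\polamid)/\rad\geq\lac{}\rad^{-\eps}\to\infty$, so $\nabla\varstf$ cannot be bounded near the corner. This proves the "more general" statement. The first assertion is the special case $\varstf=\stf$: if some neighbourhood $U$ had $\stf\geq 0$ on $U\isect\Dom$, then since $\stf$ solves \eqref{eq:stf-2d} — a uniformly elliptic equation of the required form, with $A=I-(\vv/\ssnd)^2$ bounded and uniformly positive definite because the flow is uniformly subsonic — and $\stf=0$ on $\Slipb$ hence at the corner, the general statement forces $\nabla\stf$ unbounded, contradicting $\stf\in\woba1\infty(\Dom)$; the case $\stf\leq 0$ is symmetric (replace $\stf$ by $-\stf$).

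The main obstacle I anticipate is the boundary comparison on the outer arc together with the behaviour at stagnation points: one must be sure that $\varstf$ (or $\stf$) is genuinely bounded below by a positive multiple of $\subso$ on $\set{\rad=\maxr}$, which is immediate if $\varstf>0$ there but needs a small argument if $\varstf$ touches $0$ at isolated stagnation points on the arc — one can shrink the sector angle slightly away from the bounding radii, or invoke the strong maximum principle to upgrade $\varstf\geq 0$ to $\varstf>0$ in the open sector, so that the inner arc value is strictly positive. A secondary technical point is justifying the maximum principle on the sector $S$, whose boundary includes the corner vertex $0$: this is handled because $w$ extends continuously to $0$ with $w(0)=0$, so $0$ is not a genuine obstruction and the classical maximum principle for $\Ctwo$ supersolutions of $A:\nabla^2\cdot\leq 0$ applies on $S\setdiff\set0$ with boundary values controlled on all of $\bdry S$.
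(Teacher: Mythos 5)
Your argument is correct and is essentially the paper's own proof: apply Proposition \ref{prop:subso} in the sector, use the strong maximum principle to make $\varstf$ strictly positive on a slightly shrunk compact arc, scale the subsolution down so that it lies below $\varstf$ on the whole sector boundary, and conclude by comparison that $\varstf\geq\iota\rad^{1-\eps}$ on the middle ray, which contradicts boundedness of $\nabla\varstf$ together with $\varstf=0$ at the corner. The one slip is a sign: from $L\subso=-A:\nabla^2\subso\leq 0$ you get $A:\nabla^2 w\leq 0$, not $\geq 0$, which is exactly what your (correct) description of $w$ as a supersolution and your application of the maximum principle to $-w$ require.
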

\begin{proof}
  Assume otherwise. 
  We may choose $U$ to be a ball $\set{0<\rad<\maxr}$ (with $\maxr>0$ and polar coordinates $(\rad,\pola)$ centered in $\xx$)
  so that there is --- by definition of ``protruding corner'' --- a sector $\set{\polalo<\pola<\polahi,\ 0<\rad<\maxr}$
  with angle $\polahi-\polalo$ greater than $\pi$. 
  $\varstf\geq 0$ and $A:\nabla^2\varstf=0$ implies $\varstf>0$ in the interior of the sector, by the strong maximum principle. 
  We may shrink the sector slightly so that $\varstf>0$ on its closure,
  while keeping $\polahi-\polalo>\pi$ and $\maxr>0$.
  
  By Proposition \ref{prop:subso} we can pick a $\subso$ so that for any scalar $\iota>0$ we have 
  $-A:\nabla^2(\iota\subso)\leq 0\leq-A:\nabla^2\varstf$ in the sector and $\iota\subso\leq0\leq\varstf$ on its radii.
  Moreover $\varstf>0$ on the compact arc $\set{\polalo\leq\pola\leq\polahi,\ \rad=\maxr}$
  where $\subso$ and $\varstf$ are continuous, so by taking $\iota>0$ sufficiently small we have $\iota\subso\leq\varstf$ there.
  But then $\iota\subso\leq\varstf$ on the entire sector boundary, so by the comparison principle 
  $\iota\subso\leq\varstf$ in the entire sector, in particular $\varstf\geq \iota\rad^{1-\eps}$ on some ray, 
  which is not possible if $\nabla\varstf$ is bounded and $\varstf=0$ in the corner.
\end{proof}

For later use we recall the following classical uniqueness results (see \cite[section 6]{finn-gilbarg-uniqueness}).
They are also well-known for incompressible flow and the reader may wish to skip them in a first pass. 
\begin{proposition}
  \label{prop:vgam-uq}%
  Given $\vvi$ and $\Gam$ there is at most one compressible uniformly subsonic flow around the body.
\end{proposition}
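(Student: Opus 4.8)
The plan is to reduce uniqueness to a maximum-principle argument on the difference of two streamfunctions. Suppose $\stf_1$ and $\stf_2$ are streamfunctions of two uniformly subsonic flows with the same $\vvi$ and the same circulation $\Gam$. By the expansion \eqref{eq:stf-asy}, both $\stf_1$ and $\stf_2$ equal the \emph{same} explicit leading term $\densi(\vxi y - \frac{\Gam}{2\pi}\pglau\log\sqrt{x^2+\pglau^2 y^2})$ up to an additive constant and an $O(|\xx|^{-\delta})$ remainder whose gradient is $O(|\xx|^{-1-\delta})$. Normalizing the additive constants in $\stf_1,\stf_2$ to match, the difference $w \defeq \stf_1 - \stf_2$ satisfies $w = O(|\xx|^{-\delta}) \to 0$ at infinity and $\nabla w = O(|\xx|^{-1-\delta})$; moreover $w = 0$ on $\Slipb$ (after the same normalization; if $\Body$ is connected one uses \eqref{eq:stf-const}, and in general the constants on each boundary component are pinned down by the expansion at infinity together with the circulation around each component).

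Next I would derive a linear elliptic equation for $w$. Both $\stf_i$ solve the divergence-form equation \eqref{eq:stf-divform}, i.e.\ $\ndiv\gdiv(\nabla\stf_i) = 0$ with $\gdiv(\vec p) = \hdiv(|\vec p|^2/2)\,\vec p$. Subtracting and writing $\gdiv(\nabla\stf_1) - \gdiv(\nabla\stf_2) = \big(\int_0^1 D\gdiv(\nabla\stf_2 + t\nabla w)\,dt\big)\nabla w \eqdef A(\xx)\nabla w$, one gets $\ndiv(A(\xx)\nabla w) = 0$. The matrix $D\gdiv(\vec p)$ is, up to the positive scalar $\hdiv$, the same coefficient matrix appearing in \eqref{eq:stf-2d}, with eigenvalues of one sign $1-\Mach^2$ and the other $1$ along $\vec p$ and $\vec p^\perp$; uniform subsonicity of both flows (hence, by the $\Ck{0,\alpha}$ regularity and the expansion, of all the interpolants for $|\xx|$ large, and by a compactness/continuity argument on the bounded part) gives $A$ bounded and uniformly positive definite on $\Dom$, including at infinity in $\vec a = \xx/|\xx|^2$ coordinates and at smooth boundary points.

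Finally I would apply the maximum principle to conclude $w \equiv 0$. Since $w$ is a bounded solution of a uniformly elliptic equation in divergence form on the exterior domain $\Dom$, continuous up to $\Slipb$ with $w = 0$ there, and $w \to 0$ at infinity, the maximum principle on the domain $\Dom \isect \{|\xx| < R\}$ — with boundary data $w|_{\Slipb} = 0$ and $\sup_{|\xx|=R}|w| \to 0$ as $R\to\infty$ — forces $\sup_\Dom |w| = 0$. Then $\nabla\stf_1 = \nabla\stf_2$, hence $\dens_1\vv_1 = \dens_2\vv_2$ by \eqref{eq:rhovstf}, and since both $(\dens_i,\vv_i)$ solve the Bernoulli relation \eqref{eq:berstf} with the same Bernoulli constant (fixed by $\vvi$) and lie on the subsonic branch where $(\dens,\vms)\mapsto$ is invertible via \eqref{eq:hdiv}, we recover $\dens_1 = \dens_2$ and $\vv_1 = \vv_2$.

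The main obstacle is the behavior at infinity: one must be careful that the linearized coefficient matrix $A$ remains uniformly elliptic out to infinity (which requires the full expansion \eqref{eq:vv-expansion}, not merely $\vv \to \vvi$, to control $\nabla\stf_i$ and hence $\Mach$ along the segment between the two solutions) and that $w$ genuinely decays, so that the maximum principle applies on the unbounded domain rather than producing only a one-sided bound. The boundary-constant normalization for a multiply-connected $\Body$ is a secondary technical point, handled by matching the circulations component-by-component against the single global parameter $\Gam$ in the expansion; this is exactly the content worked out in \cite[Section 6]{finn-gilbarg-uniqueness}.
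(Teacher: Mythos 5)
Your reduction to a divergence-form equation $\ndiv(A(\xx)\nabla w)=0$ with $A=\int_0^1 D\gdiv(\nabla\stf_2+t\nabla w)\,dt$ uniformly positive definite is exactly right (and for the ellipticity of the interpolants you do not need the full expansion: $|\nabla\stf_2+t\nabla w|\le\max(|\nabla\stf_1|,|\nabla\stf_2|)$ by convexity, so uniform subsonicity of the two endpoints already gives it). The gap is in the final step. The maximum principle needs \emph{both} $w=0$ on $\Slipb$ \emph{and} $w\to0$ at infinity, and you cannot arrange both by ``normalizing the additive constants to match.'' With the standard normalization $\stf_1=\stf_2=0$ on $\Slipb$, the constant $c_i$ appearing in the expansion \eqref{eq:stf-asy} of each solution is an \emph{output} of that solution, and nothing forces $c_1=c_2$; so $w\to c_1-c_2$ at infinity, and the maximum principle on $\Dom\isect\set{|\xx|<R}$ only yields $0\le w\le c_1-c_2$ (say), which is perfectly consistent with $w\not\equiv0$. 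If instead you shift one solution to match the constants at infinity, you lose $w=0$ on $\Slipb$. Ruling out a nonzero constant at infinity is genuinely hard: it is precisely the step for which the paper's Proposition \ref{prop:uniqueness-corner} has to invoke a protruding corner via Proposition \ref{prop:singlesign}, and Proposition \ref{prop:vgam-uq} assumes no corner.

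The paper avoids this entirely with an energy argument: subtract the two weak formulations \eqref{eq:compweak} and test with $\vartheta=d$ itself (justified by cutoff, since $d$ is bounded and $\nabla d=O(|\xx|^{-1-\delta})$ makes the integrand $O(|\xx|^{-2-2\delta})$, absolutely integrable in the plane, with cutoff error $O(R^{-\delta})$). This gives $0=\int_\Dom\nabla d\dotp\overline A\nabla d\,d\xx$, hence $\nabla d\equiv0$ directly, and then $d\equiv0$ from the boundary condition --- no control of the constant at infinity is ever needed. If you want to keep a maximum-principle flavor, you would first have to prove $c_1=c_2$, which your proposal does not do; otherwise switch to the integration-by-parts argument.
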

\begin{proof}
  Assume there are two different ones, $\stf_q$ for $q=0,1$, 
  for same $\vvi$ and $\Gam$ so that their difference $d=\stf_1-\stf_0$ satisfies
  \begin{alignat*}{7} \nabla d \topref{eq:vv-expansion}{=} O(|\xx|^{-1-\delta}) \end{alignat*} 
  The difference of the weak formulations \eqref{eq:compweak} yields
  \begin{alignat*}{7} 0 &= \int_{\Dom} \nabla\vartheta \dotp \big( 
  \gdiv( \nabla\stf_1 )
  -  
  \gdiv( \nabla\stf_0 )
  \big) d\xx
  =
  \int_{\Dom} \nabla\vartheta \dotp \overline A(\xx) \nabla d~d\xx
  \end{alignat*} 
  where
  \begin{alignat*}{7} \overline A(\xx) 
  &= 
  \int_0^1 \gdiv'(\nabla\stf_q) dq 
  \end{alignat*}
  with $\stf_q=q\stf_1+(1-q)\stf_0$. 
  Mach number is a decreasing function of $|\nabla\stf|$, so the by convexity the flows defined by $\stf_q$ are subsonic uniformly in $\xx$ as well as $q\in\cli01$. 
  Therefore $\overline A$ is uniformly positive definite.
  
  Consider $\vartheta=d$ as test function:
  \begin{alignat*}{7} 0 = \int_{\Dom} \nabla d\dotp\overline A(\xx)\nabla d~d\xx \end{alignat*} 
  ($d$ as a test functions is admissible because $d\in\woba1\infty_\loc$, 
  which is sufficiently regular to approximate $\nabla d$ in $\Lone_\loc$ by smooth test function gradients;
  moreover the integrand is $O(|\xx|^{-2-2\delta})$, hence absolutely integrable,
  allowing approximation by compactly supported gradients.)
  Since $\overline A$ is uniformly positive definite, we immediately obtain $\nabla d=0$ and hence $d=0$ (since $d=0$ on the
  slip boundary). 
\end{proof}

\begin{proposition}
  \label{prop:uniqueness-corner}%
  If the body has at least one protruding corner, 
  then given $\vvi$ there is at most one compressible flow. 
\end{proposition}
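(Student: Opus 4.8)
The plan is to reduce uniqueness given only $\vvi$ to uniqueness given $\vvi$ \emph{and} $\Gam$ (Proposition~\ref{prop:vgam-uq}), by showing that the presence of a protruding corner pins down the circulation $\Gam$. Suppose we have two uniformly subsonic flows $\stf_0,\stf_1$ with the same $\vvi$ but possibly different circulations $\Gam_0,\Gam_1$. The difference $d=\stf_1-\stf_0$ vanishes on $\Slipb$ and, by the streamfunction expansion \eqref{eq:stf-asy}, behaves at infinity like $-\densi\frac{\Gam_1-\Gam_0}{2\pi}\pglau\log\sqrt{x^2+\pglau^2y^2}+\const+O(|\xx|^{-\delta})$, with gradient $O(|\xx|^{-1})$ coming from the logarithmic term plus $O(|\xx|^{-1-\delta})$. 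The strategy is: (i) if $\Gam_1=\Gam_0$, then Proposition~\ref{prop:vgam-uq} already gives $d=0$; (ii) if $\Gam_1\neq\Gam_0$, derive a contradiction from the growth of $d$ versus the behavior forced at the protruding corner.

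First I would write the difference of the weak formulations \eqref{eq:compweak} exactly as in the proof of Proposition~\ref{prop:vgam-uq}, obtaining $0=\int_{\Dom}\nabla\vartheta\dotp\overline A(\xx)\nabla d~d\xx$ with $\overline A$ bounded and uniformly positive definite (same convexity-of-Mach argument). Now $d$ solves the linear equation $\ndiv(\overline A\nabla d)=0$ in $\Dom$ with $d=0$ on $\Slipb$. At the protruding corner $\xx_*$ apply Proposition~\ref{prop:singlesign} (the general ``more precise'' form): since $\nabla d$ is bounded near $\xx_*$ — both $\nabla\stf_0$ and $\nabla\stf_1$ are bounded there by the regularity discussion, indeed $\nabla\stf_q\to(\vxi,0)$ — and $d$ is continuous up to $\xx_*$ with $d(\xx_*)=0$, Proposition~\ref{prop:singlesign} forces $d$ to change sign in every neighborhood of $\xx_*$ intersected with $\Dom$. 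So $d$ takes both signs near the corner.

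The core of the argument is then to show the sign of $d$ is \emph{asymptotically fixed at infinity}, contradicting nothing yet unless we combine it with a global maximum-principle statement. The cleaner route: if $\Gam_1\neq\Gam_0$, the function $d$ has a definite sign (say the logarithmic term dominates, so $d\to-\infty$ or $d\to+\infty$ depending on the sign of $\Gam_1-\Gam_0$) outside a large ball. Replacing $d$ by $d+c$ for a suitable constant $c$ (still a solution of the same linear equation, still vanishing-up-to-a-constant on $\Slipb$), we would want to derive a one-sided bound. The slick alternative, which I expect to be the intended one, is to test the weak identity with $\vartheta=d$ directly: $0=\int_{\Dom}\nabla d\dotp\overline A\nabla d~d\xx$. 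The subtlety is admissibility of $d$ as a test function: now $\nabla d$ is only $O(|\xx|^{-1})$, so the integrand is $O(|\xx|^{-2})$, which is \emph{not} absolutely integrable over $\R^2$ (the logarithmic divergence), so the truncation argument from Proposition~\ref{prop:vgam-uq} fails. Handling this divergence is the main obstacle: one must either show the boundary term at infinity from integration by parts vanishes (it does not, precisely when $\Gam_1\neq\Gam_0$, and its sign is controlled), or subtract off the explicit logarithmic asymptotic profile $h(\xx)=-\densi\frac{\Gam_1-\Gam_0}{2\pi}\pglau\log\sqrt{x^2+\pglau^2y^2}$ and argue with $d-h$, which has $O(|\xx|^{-1-\delta})$ gradient and is admissible, at the cost of an inhomogeneous term $\ndiv(\overline A\nabla h)$ that is $O(|\xx|^{-2})$ — again borderline but now integrable against the $O(|\xx|^{-\delta})$ growth of $d-h$ only marginally. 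I expect the actual proof avoids all this by a Phragmén–Lindelöf / maximum-principle argument: if $\Gam_1>\Gam_0$ then $d\to+\infty$ at infinity, so $d$ attains its infimum either on $\Slipb$ (where $d=0$) or at an interior point; combined with the sign-change forced at the corner by Proposition~\ref{prop:singlesign}, $d$ must attain a negative interior minimum, violating the strong maximum principle for $\ndiv(\overline A\nabla d)=0$. Symmetrically for $\Gam_1<\Gam_0$ using $-d$. Hence $\Gam_1=\Gam_0$, and Proposition~\ref{prop:vgam-uq} finishes the proof.
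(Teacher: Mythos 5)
Your final argument is essentially the paper's own proof: the expansion \eqref{eq:stf-asy} shows that when the circulations differ the difference $d$ has a definite sign near infinity, and combining the strong maximum principle for $\ndiv(\overline A\nabla d)=0$ with Proposition \ref{prop:singlesign} at the protruding corner rules this out, forcing equal circulations. The only (harmless) deviations are a sign slip ($\Gam_1>\Gam_0$ makes $d\to-\infty$, not $+\infty$, which your symmetric treatment absorbs) and that you then delegate the final step to Proposition \ref{prop:vgam-uq}, whereas the paper repeats the same maximum-principle argument to kill the additive constant $c_0$ and then $d$ itself.
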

\begin{proof}
  Assume there are two different $\stf_q$ ($q=0,1$) for the same $\vvi$. 
  By \eqref{eq:stf-asy} their difference behaves like
  \begin{alignat*}{7} d = c_1 \log(x^2+\pglau^2y^2) + c_0 + o(1) \quad\text{as $|\xx|\conv\infty$.} \end{alignat*} 
  If $c_1$ is nonzero, then $\log(x^2+\pglau^2y^2)>0$ near infinity means $\sign d=\sign c_1$ there; 
  now $d=0$ on the slip boundary 
  implies 
  by the strong maximum principle
  that $\sign d=\sign c_1$ throughout all of $\Dom$. 
  But that contradicts Proposition \ref{prop:singlesign}. 
  Hence $c_1=0$. 
  Next it can be shown that $c_0=0$, for the same reason. 
  But then $d\conv 0$ at infinity, so the strong maximum principle implies $d=0$. 
\end{proof}

\section{Nonexistence by symmetry}

\begin{figure}
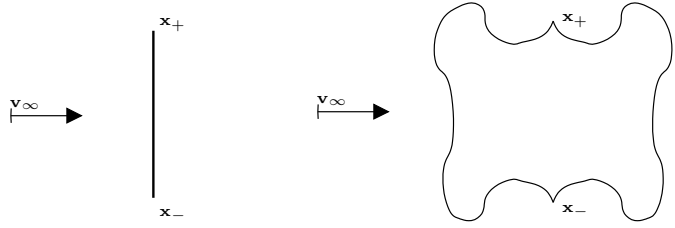

\hfil%
\input{verticalplate.pstex_t}
\hfil%
\input{recessedcorners.pstex_t}%
\hfil%
\caption{Left: flow onto a vertical plate. Right: flow onto a body symmetric across the flow axis, with $y$ extrema not attained in corners.}
\label{fig:recessedcorner}
\end{figure}

\begin{theorem}
  \label{th:verticalsymmetry}%
  Assume that $\Body$ is symmetric under $y\leftarrow-y$
  and has two protruding corners that are not on the horizontal axis. 
  Then there is no nontrivial uniformly subsonic flow, 
  and no nontrivial incompressible flow with bounded $\vv$. 
\end{theorem}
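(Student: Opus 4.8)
The plan is to use the reflection $\sigma\colon(x,y)\mapsto(x,-y)$ together with the uniqueness results to force the streamfunction to be odd in $y$, hence to vanish on the flow axis, and then to contradict Proposition~\ref{prop:singlesign} at the protruding corner lying strictly above the axis via a maximum principle on the upper half of the fluid domain.

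First I would show that if $\stf$ is a nontrivial uniformly subsonic flow, then $\tilde\stf(x,y):=-\stf(x,-y)$ is again such a flow with the same velocity $\vvi=(\vxi,0)$ at infinity. Indeed, the streamfunction equation \eqref{eq:stf-2d}, equivalently the weak form \eqref{eq:compweak}, is invariant under $(x,y,\stf)\mapsto(x,-y,-\stf)$; the slip condition $\stf=0$ on $\Slipb$ is preserved because $\Body$ is $\sigma$-invariant; and the corresponding velocity $\tilde\vv(x,y)=(\vx(x,-y),-\vy(x,-y))$ still tends to $(\vxi,0)$ at infinity (while the circulation changes sign). Since $\Body$ has a protruding corner, Proposition~\ref{prop:uniqueness-corner} forces $\stf=\tilde\stf$, i.e.\ $\stf(x,y)=-\stf(x,-y)$. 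Two consequences: $\stf=0$ on $\set{y=0}\isect\Dom$; and the even-in-$y$ part of the expansion \eqref{eq:stf-asy}, namely its logarithmic term and additive constant, must vanish, so that $\Gam=0$ and $\stf=\densi\vxi y+O(|\xx|^{-\delta})$ with a remainder vanishing on $\set{y=0}$.

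Next, because $\Body$ is symmetric and the two protruding corners avoid the axis, they are interchanged by $\sigma$, so exactly one of them — call it $\xxmax$ — lies in $\set{y>0}$. On the finite part of the boundary of $\Dom^+:=\Dom\isect\set{y>0}$, namely $\Slipb\isect\set{y>0}$ and $\set{y=0}\isect\Dom$, we have $\stf=0$; and since $\densi\vxi y\geq0$ on $\Dom^+$ while the remainder tends to $0$, we get $\liminf\stf\geq0$ as $|\xx|\to\infty$ in $\Dom^+$. Exhausting $\Dom^+$ by the bounded domains $\Dom^+\isect\set{|\xx|<\rho}$ and applying the (weak) minimum principle for the uniformly elliptic equation satisfied by $\stf$ yields $\stf\geq0$ on all of $\Dom^+$. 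Now a small ball $U$ about $\xxmax$ lies in $\set{y>0}$, so $\stf\geq0$ on $U\isect\Dom=U\isect\Dom^+$; since $\nabla\stf$ is bounded (it lies in $\woba1\infty(\Dom)$), this contradicts Proposition~\ref{prop:singlesign}. (If $\stf\equiv0$ on $\Dom^+$ then oddness gives $\stf\equiv0$ on $\Dom$, i.e.\ the trivial flow; so nontriviality guarantees $\stf\not\equiv0$ on $\Dom^+$, which is what makes the strong-maximum-principle step in the proof of Proposition~\ref{prop:singlesign} bite.) The incompressible statement follows verbatim, using the incompressible analogues of \eqref{eq:stf-asy}, Proposition~\ref{prop:uniqueness-corner} and Proposition~\ref{prop:singlesign} (harmonic $\stf$; Laurent expansion $\stf=\vxi y-\tfrac{\Gam}{2\pi}\log|z|+\const+O(|z|^{-\delta})$; $\nabla\stf$ bounded since $\vv$ is).

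The substantive idea is the first step: reflection symmetry plus uniqueness upgrades a solution to an \emph{odd} one. The point needing the most care is checking that $\tilde\stf$ is a legitimate competitor for the uniqueness theorem — that it satisfies the weak form \eqref{eq:compweak} and has the correct expansion at infinity — together with the exhaustion argument at infinity (which is elementary precisely because one only needs $\liminf\stf\geq0$, not a pointwise lower bound). It is worth noting why the hypothesis that the corners avoid the axis is needed: if $\xxmax$ lay on the axis, its $\sigma$-symmetric protruding sector would be split by the axis into an upper part with $\stf\geq0$ and a lower part with $\stf\leq0$, joined along a ray where $\stf=0$, so $\stf$ would change sign there and Proposition~\ref{prop:singlesign} would \emph{not} be contradicted — as it must not be, given the horizontal-plate solution.
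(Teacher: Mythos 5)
Your proposal is correct, and its first half coincides with the paper's: reflect, invoke Proposition~\ref{prop:uniqueness-corner} to upgrade the solution to an odd one, and conclude $\stf=0$ on the axis. The endgame, however, is genuinely different. The paper argues topologically: Proposition~\ref{prop:singlesign} forces points with $\stf<0$ arbitrarily close to a protruding corner on each side of the axis, so $\Ns$ would be disconnected (the axis carries only $\stf=0$ or body points), while the far-field structure $\stf_y=\densi\vxi+o(1)>0$ plus the strong maximum principle (to exclude bounded components) shows $\Ns$ is connected --- contradiction. You instead extract from oddness the extra information $\Gam=0$ and $c_0=0$, hence $\stf=\densi\vxi y+O(|\xx|^{-\delta})$, and run a weak minimum principle on exhaustions of $\Dom\isect\set{y>0}$ to get $\stf\geq0$ on the whole upper half-domain, contradicting Proposition~\ref{prop:singlesign} at the upper corner directly. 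Your route trades the connectedness argument for a barrier-at-infinity argument; it is arguably more quantitative and localizes the contradiction at a single corner, at the cost of needing the refined expansion (though in fact the paper's weaker observation that $\stf>0$ on the upper half of a large circle would already suffice for your exhaustion step). Your parenthetical about the degenerate case $\stf\equiv0$ near the corner, and your remark on why the corners must avoid the axis, are both apt --- the latter is exactly the mechanism that spares the horizontal plate. One trivial point: the two hypothesized corners need not be swapped by the reflection (both could lie above the axis), but all you use is that at least one protruding corner has $y\neq0$, so nothing is affected.
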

\begin{proof}
  Assume there is such a flow $\stf$.
  We may apply the symmetry transformation
  \begin{alignat*}{7} \stf(x,y) \leftarrow -\stf(x,-y), \end{alignat*} 
  which does not change $\Slipb$ and preserves $\stf=0$ there, whereas
  \begin{alignat*}{7} 
  \stf_x(x,y) &\leftarrow -\stf_x(x,-y), \\
  \stf_y(x,y) &\leftarrow \stf_y(x,-y), \\
  |\nabla\stf(x,y)| &\leftarrow |\nabla\stf(x,-y)|,
  \end{alignat*}
  and therefore, using $\dens=\ipif(|\nabla\stf|)$ and $(\vx,\vy)=\dens^{-1}(\stf_y,-\stf_x)$,
  \begin{alignat*}{7}
  \dens(x,y) &\leftarrow \dens(x,-y) , \\
  \vx(x,y) &\leftarrow \dens(x,y)^{-1}\stf_y(x,y) = \dens(x,-y)^{-1}\stf_y(x,-y) = \vx(x,-y), \\
  \vy(x,y) &\leftarrow -\dens(x,y)^{-1}\stf_x(x,y) = \dens(x,-y)^{-1}\stf_x(x,-y) = -\vy(x,-y).
  \end{alignat*}
  Hence $\vvi$ is unchanged, 
  so the new $\stf$ is another uniformly subsonic flow around the same body. 
  By Proposition \ref{prop:uniqueness-corner} there is at most one\footnote{The argument fails for symmetric bodies without corner; consider the incompressible flows around a circle, one for esch $\Gam$ and only the $\Gam=0$ one is symmetric.} such flow, 
  which is therefore symmetric under the transformation above,
  in particular $\stf=0$ on the horizontal axis.
  
  By symmetry there are protruding corners on both sides of the axis,
  so since by Proposition \ref{prop:singlesign} there are points with $\stf<0$ arbitrarily close to each protruding corner,
  such points occur on both sides of the axis. 
  %one protruding corner $\xxmax$ is above the horizontal axis, the other protruding corner $\xxmin$ is below. 
%  By Proposition \ref{prop:singlesign} both corners are in the closure of $\Ns$. 
  But the entire horizontal axis has only points either inside the body or with $\stf=0$. 
  Hence $\Ns$ is disconnected, which we disprove as follows: 
  
  By \eqref{eq:stf-asy} $\stf_y=\densi\vxi+o(1)>0$ near infinity, and $\stf=0$ on the horizontal axis, 
  so the neighbourhood $\set{|\xx|\geq R}$ of infinity with $R>0$ sufficiently large has a half above the axis where $\stf>0$
  and a half below the axis where $\stf<0$.
  Hence there can be only one unbounded connected component of $\Ns$. 
  If $\Ns$ had a bounded connected component, then its closure would have $\stf=0$ on the boundary, but a negative value in its interior,
  so that a minimum is attained in the interior, contradicting the strong maximum principle for \eqref{eq:stf-2d}.
  Hence $\Ns$ is connected, a contradiction that completes the proof. 
\end{proof}

\begin{example}
    The vertical flat plate (fig. \ref{fig:recessedcorner} left) satisfies the conditions of Theorem \ref{th:verticalsymmetry}, 
    as does the profile with slightly sunken corners in fig. \ref{fig:recessedcorner} right. 
    The horizontal flat plate, for which flows with nonzero $\vxi$ exists, 
    satisfies all conditions except that the corners are on the axis. 
\end{example}

\section{Nonexistence for extremal $y$ corners}

Plates at other angles do not have the symmetry required in the previous section, but they can be discussed using another approach. 

\begin{theorem}
  \label{th:yminmax}
  Assume $\Body$ is not contained in a horizontal line. 
  Assume it has two protruding corners that are the lowest and highest point of $\Body$. 
  Then no nontrivial uniformly subsonic flow exists,
  nor nontrivial incompressible flow with bounded $\vv$. 
\end{theorem}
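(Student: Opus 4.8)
The plan is to mimic the structure of the proof of Theorem \ref{th:verticalsymmetry}, but without the luxury of a symmetry that forces $\stf=0$ on a line through both corners. Let $\xxmax$ and $\xxmin$ be the highest and lowest points of $\Body$, which are the two protruding corners. The key structural fact I want to extract is a topological contradiction analogous to ``$\Ns$ disconnected'', obtained by tracking the sign of $\stf$ near each corner and near infinity and showing these signs cannot be reconciled with the connectedness of the positive and negative sets of a solution of the elliptic equation \eqref{eq:stf-2d}.

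First I would record the behavior at infinity. By \eqref{eq:stf-asy}, $\stf = \densi(\vxi y - \frac{\Gam}{2\pi}\pglau\log\sqrt{x^2+\pglau^2 y^2}) + \const + O(|\xx|^{-\delta})$. For a \emph{nontrivial} flow, Proposition \ref{prop:uniqueness-corner} says $\stf$ is the unique flow for its $\vvi$; if $\vxi>0$ then the $\vxi y$ term dominates the logarithm at infinity, so far out $\stf>0$ in the upper half and $\stf<0$ in the lower half, and each of $\Ps$, $\Ns$ has exactly one unbounded component (the argument from Theorem \ref{th:verticalsymmetry} applies verbatim). The degenerate subcase $\vxi=0$ needs separate handling: then $\stf$ is governed near infinity by the logarithm term (if $\Gam\ne0$) or is $O(|\xx|^{-\delta})$ (if also $\Gam=0$); in the latter case $\stf\to\const$ at infinity and the strong maximum principle for \eqref{eq:stf-2d} plus $\stf=0$ on $\Slipb$ forces $\stf\equiv0$ on $\Dom$ after noting the constant must be $0$ — handling this reduction carefully is a small but necessary chore.

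Next, the corners. The body lies in the horizontal strip between $y=y(\xxmin)$ and $y=y(\xxmax)$, and $\xxmax$ is the unique topmost point with the whole corner sector opening \emph{upward} into $\Dom$; similarly $\xxmin$ opens downward. Since $\stf=0$ on $\Slipb$, and by \eqref{eq:stf-asy} $\stf>0$ everywhere sufficiently high (above the strip, far enough that the log is beaten — here again $\vxi>0$ is used, with the $\vxi=0$ case treated separately), I claim the sector at $\xxmax$ sees $\stf>0$ on one side and, by Proposition \ref{prop:singlesign}, must also see $\stf<0$ nearby — so there are points of $\Ns$ arbitrarily close to $\xxmax$. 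The same reasoning at $\xxmin$, using that $\stf<0$ everywhere sufficiently low, produces points of $\Ps$ arbitrarily close to $\xxmin$ (and also points of $\Ns$). The crucial geometric input is that the region ``above the strip'' is connected to the upper unbounded component of $\Ps$, and likewise below, so the $\Ns$-points clinging to $\xxmax$ cannot lie in the unbounded lower component of $\Ns$ without a path of $\stf<0$ passing around or through the body's strip — which is blocked because the body occupies a connected obstacle spanning the full height of the strip and $\stf=0$ on it. I would make this precise by a separating-curve argument: a simple arc in $\set{\stf<0}$ from near $\xxmax$ to near $\xxmin$ together with a subarc of $\Slipb$ would enclose a region on whose boundary $\stf\le0$ but which contains points with $\stf>0$ (the $\Ps$-points near $\xxmin$, or points high above), contradicting the maximum principle — alternatively, phrase it as: $\Ns$ must be disconnected, and then kill bounded components by the strong maximum principle exactly as in Theorem \ref{th:verticalsymmetry}.

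The main obstacle is the topological bookkeeping in the previous paragraph: without symmetry I cannot simply say ``the axis is all body-or-zero''; instead I must argue that the horizontal strip containing $\Body$, with $\Body$ a connected obstacle touching both the top line $y=y(\xxmax)$ and the bottom line $y=y(\xxmin)$ of the strip at the corners, acts as a barrier separating the ``upper'' sign pattern from the ``lower'' one. I expect to need that $\Dom$ minus $\Body$ is still connected but that any path in $\Dom$ from the far upper region to the far lower region must cross either $\set{\stf=0}$ or pass around one of the corners through a neighborhood where, by Proposition \ref{prop:singlesign}, the sign is not locally constant — yielding in all cases a point where $\stf$ changes sign, hence a zero level set separating $\Ps$ into pieces in a way incompatible with the single-unbounded-component structure forced at infinity. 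Packaging this cleanly, and cleanly disposing of the $\vxi=0$ and $\Gam=0$ degenerate cases, is where the real work lies; the elliptic PDE facts (strong maximum principle for \eqref{eq:stf-2d}, uniqueness from Propositions \ref{prop:vgam-uq} and \ref{prop:uniqueness-corner}, the corner subsolution from Proposition \ref{prop:singlesign}) are all in hand.
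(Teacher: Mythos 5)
Your overall strategy --- deducing a topological contradiction from the connectedness of $\Ps$ and $\Ns$ together with the sign changes forced near each corner by Proposition \ref{prop:singlesign} --- cannot be completed, because the configuration you are trying to exclude is topologically realizable. With two corners the relevant incidence graph is $K_{2,3}$ (the two corners against $\Ps$, $\Ns$ and $\Body$), which embeds in the plane; the introduction points this out explicitly as the reason the polygon argument of \cite{elling-polylow} breaks down for two protruding corners. Concretely: let the zero level set consist of $\Slipb$ together with one arc from $\xxmax$ to one end of the asymptotic zero curve at infinity and one arc from $\xxmin$ to the other end. Then $\Ps$ and $\Ns$ are each connected and unbounded, each meets every neighbourhood of both corners, and the signs at infinity are as dictated by \eqref{eq:stf-asy} --- yet nothing is contradicted. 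Your separating-curve step has the same problem: an arc in $\Ns$ from near $\xxmax$ to near $\xxmin$, closed up along $\Slipb$, encloses a region on whose boundary $\stf\le 0$, whence $\stf\le 0$ inside by the maximum principle; but there is no reason the positive points near $\xxmin$ (which exist on \emph{some} side of the body) or the far-above region lie \emph{inside} that curve, so no contradiction results. The body does not act as a barrier across the horizontal strip: it touches the top and bottom lines of the strip only at the two corner points, and a path in $\Ns$ can pass around it on either side.

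The ingredient you are missing is quantitative rather than topological: the paper sets $\varstf=\densi\vxi y-\stf$, which satisfies the same elliptic equation (since $\hess(\densi\vxi y)=0$) and whose boundary values on $\Slipb$ are exactly $\densi\vxi y$, hence are maximized at $\xxmax$ and minimized at $\xxmin$ --- this is precisely where the hypothesis that the corners are the extremal-$y$ points of $\Body$ enters. By \eqref{eq:stf-asy}, $\varstf=c_1\log(x^2+\pglau^2y^2)+c_0+o(1)$ at infinity, so for each sign of $c_1$ (and then for $c_1=0$) a global extremum of $\varstf$ over $\cDomi$ is attained at one of the corners; subtracting that extremal value produces a one-signed solution of the elliptic equation vanishing at a protruding corner with bounded gradient, contradicting Proposition \ref{prop:singlesign}. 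I would recommend abandoning the connectivity argument in favour of this comparison-function device; your observations about the asymptotics and about sign changes at the corners are correct and reappear there, but only through the maximum principle applied to $\varstf$, not to $\stf$ itself.
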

\begin{proof}
  Let
  \begin{alignat*}{7} \varstf = \stfi-\stf \csep \stfi = \densi\vxi y . \end{alignat*} 
  Then $\hess\stfi=0$, so $\varstf$ also satisfies
  \begin{alignat*}{7} 0 = (I-\ssnd^{-2}\vv^2):\hess\varstf . \end{alignat*} 
  By the strong maximum principle $\varstf$ cannot attain extrema in the set $\Dom$ of fluid points.

  $\stf=0$ on $\Slipb$ means $\varstf=\stfi=\densi\vxi y$ there, and $\densi\vxi>0$, 
  so since the protruding corners maximize resp.\ minimize $y$ over $\Slipb$, they do the same for $\varstf$. 

  At infinity, the expansion \eqref{eq:stf-asy} yields that 
  \begin{alignat*}{7} \varstf = c_1 \log(x^2+\beta^2y^2) + c_0 + o(1) \quad\text{as $|\xx|\conv\infty$} \end{alignat*} 
  where $c_1,c_0$ are some constants. 
  $\log(x^2+\beta^2y^2)$ is positive and dominant near infinity.
  If $c_1>0$, then $\varstf\conv+\infty$ at infinity,
  so $\varstf$ attains its minimum over $\cDom$ at the corner where $\varstf$ is smaller. 
  After subtracting the minimum value from $\varstf$ we have $\varstf=0$ in the corner, $\varstf\geq 0$ on $\Slipb$ 
  and hence a contradiction to Proposition \ref{prop:singlesign}.
  Similarly $c_1<0$ is excluded since it implies a global maximum of $\varstf$ in the upper corner. 
  Therefore $\varstf$ converges to $c_0$ at infinity. 
  But then we may repeat the same argument to obtain a contradiction.
\end{proof}

Since non-horizontal flat plates (fig.\ \ref{fig:diagonalplate} right) have two protruding corners 
in which the $y$ maxima and minima over the plate are attained, 
Theorem \ref{th:yminmax} immediately yields:
\begin{theorem}
  \label{th:flatplates}%
  There are no nontrivial uniformly subsonic flows around non-horizontal flat plates, 
  nor nontrivial incompressible flow with bounded $\vv$. 
\end{theorem}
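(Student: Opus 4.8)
The plan is to deduce Theorem \ref{th:flatplates} directly from Theorem \ref{th:yminmax} by checking that a non-horizontal flat plate satisfies the hypotheses of the latter. First I would record the standard convention that a flat plate is the degenerate body $\Body$ equal to a straight line segment $\cli{\xxmin}{\xxmax}$ with empty interior, whose boundary $\Slipb=\Body$ is this segment traced once from each face; the slip condition $\stf=0$ then holds on both faces, and the expansion and regularity results of the preceding sections apply as usual since $\Dom=\R^2\setdiff\Body$ is the complement of a bounded closed set.

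Next I would verify that the two endpoints $\xxmin$ and $\xxmax$ are protruding corners in the sense of the definition: near either endpoint the fluid domain looks locally like the plane minus a single radial slit, so a circular sector of radius $\eps>0$ and opening angle $\delta$ just below $2\pi>\pi$ centered at that endpoint has interior contained in $\Dom$. Thus the plate has exactly two protruding corners, namely its endpoints.

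Then, since the plate is not horizontal, the coordinate function $\xx\mapsto y$ restricted to the segment is affine and strictly monotone along it, so its minimum and maximum over $\Body$ are attained precisely at the two endpoints $\xxmin$, $\xxmax$, and nowhere else; moreover a non-horizontal segment is not contained in any horizontal line. Hence all hypotheses of Theorem \ref{th:yminmax} are met, and that theorem immediately yields nonexistence of nontrivial uniformly subsonic flows, as well as of nontrivial incompressible flows with bounded $\vv$, around every non-horizontal flat plate.

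There is no real obstacle here: the argument is a verification of hypotheses followed by a citation. The only point deserving a line of care is the degenerate-body bookkeeping — confirming that Proposition \ref{prop:singlesign} and Theorem \ref{th:yminmax} apply verbatim when $\Body$ has empty interior and $\Slipb$ is a twice-traced segment whose only corners are its two endpoints — but since those proofs use only that $\Dom$ is the complement of a bounded closed set and that each protruding corner admits the sector in the definition, both of which hold here, nothing further is needed.
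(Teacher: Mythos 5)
Your proposal is correct and takes the same route as the paper, which derives Theorem \ref{th:flatplates} as an immediate corollary of Theorem \ref{th:yminmax} by observing that the two endpoints of a non-horizontal plate are protruding corners at which the $y$-extrema over the body are attained. The extra care you take with the degenerate (empty-interior) body and the twice-traced boundary is a reasonable addition but not something the paper spells out.
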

This theorem shows the behaviour suggested in the introduction: 
for two protruding corners, a single parameter $\Gam$ has to satisfy two constraints;
that can be expected only in special cases, namely zero angle of attack.

\section{Nonexistence for low Mach numbers}

Theorem \ref{th:yminmax} covers all nonhorizontal plates but does not apply to the profile in Figure \ref{fig:recessedcorner} right,
which is covered by Theorem \ref{th:verticalsymmetry} which could only handle vertical plates. 
These theorems were proven by a combination of topological arguments and maximum principles. 
Of course many further arguments along these lines are conceivable,
but it seems that any number of them would cover some partially overlapping families of profiles while leaving other profiles uncovered.

For incompressible flow the non-existence of flows with bounded velocity is easier to decide 
due to an arsenal of complex analysis techniques. 
It is natural to extend these results to small-Mach compressible flows by linearizing the latter. 
Instead of a potentially cumbersome implicit function theorem approach
We give a rather short compactness argument based on Morrey estimates. 

\begin{theorem}
  \label{th:smallmach}%
  Assume there are no nontrivial incompressible flows with bounded $\vv$. 
  Then there exists a $\Machsmall>0$ so that there are no nontrivial 
  uniformly subsonic flows around $\Body$ with $\sup_{\Dom}\Machn\leq\Machsmall$. 
\end{theorem}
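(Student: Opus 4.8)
The plan is a compactness argument reducing the low‑Mach problem to the incompressible one. Suppose the claim fails: then for each $n$ there is a nontrivial uniformly subsonic flow $\stf^n$ around $\Body$ with $\sup_\Dom\Machn\le 1/n$. I would normalize in two steps. First, a similarity scaling of the equations that multiplies $\vpot,\vv$ by a constant $\mu$ and $\dens$ by $\mu^{2/(\isenc-1)}$ (hence the Bernoulli constant by $\mu^2$) leaves the polytropic steady Euler system and the Mach number invariant, so I may assume $\densz=1$ for every $\stf^n$; then $\ssnd^n\le\sqrt\isenc$, so $\|\vv^n\|_{L^\infty}\le\sqrt\isenc/n\to0$ and $\dens^n\to1$ uniformly, whence $\eps_n:=\|\nabla\stf^n\|_{L^\infty(\Dom)}=\|\dens^n\vv^n\|_{L^\infty}\to0$. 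Second, put $\tilde\stf^n:=\stf^n/\eps_n$ (note $\eps_n>0$, as $\nabla\stf^n\equiv0$ would force $\stf^n\equiv0$ by the slip condition); dividing \eqref{eq:compweak} by $\eps_n$ shows $\tilde\stf^n$ weakly solves $\ndiv\bigl(\hdiv(\half\eps_n^2|\nabla\tilde\stf^n|^2)\nabla\tilde\stf^n\bigr)=0$, equivalently the linear equation \eqref{eq:stf-2d} with coefficient matrix $I-(\vv^n/\ssnd^n)^2$, which by $\sup_\Dom\Machn\le1/n$ is uniformly elliptic and converges to $I$ uniformly on $\Dom$, while $\|\nabla\tilde\stf^n\|_{L^\infty(\Dom)}=1$ and $\tilde\stf^n=0$ on $\Slipb$.

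Next I would extract a limit and identify it. Since the ellipticity ratio is bounded (indeed $\to1$) and $\|\nabla\tilde\stf^n\|_{L^\infty}=1$, the Morrey and Schauder estimates recalled above are uniform in $n$ and bound $\|\nabla\tilde\stf^n\|_{\Ck{0,\alpha}}$ on every compact subset of $\cDomi$ that avoids the two protruding corners (interiorly, along the $\Cinf$ arcs of $\Slipb$, and near $\infty$ via $\aa=\xx/|\xx|^2$), while $|\nabla\tilde\stf^n|\le1$ gives uniform equicontinuity of the $\tilde\stf^n$ near the corners. By Arzel\`a--Ascoli and a diagonal argument a subsequence satisfies $\tilde\stf^n\to\tilde\stf^\infty$ in $\Cone_\loc$ off the corners and uniformly on compacta of $\cDomi$. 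Passing to the limit in the weak equation (using $\eps_n\to0$, $\hdiv(0)=1$, and a.e.\ convergence of the gradients off the null corner set, dominated by $1$) gives that $\tilde\stf^\infty$ is harmonic in $\Dom$, vanishes on $\Slipb$, and has $|\nabla\tilde\stf^\infty|\le1$; moreover \eqref{eq:stf-asy} applied to $\stf^n$ and divided by $\eps_n$ has leading coefficients that stay bounded (the $y$-coefficient is $\densi|\vxi|\le\eps_n$, the $\log$-coefficient is $O(|\Gam|)$ with $|\Gam|\le C(R)\eps_n$ by deforming the circulation integral onto $\set{|\xx|=R}$), so a further subsequence gives $\tilde\stf^\infty=A\,y-G\log|\xx|+\const+o(1)$ at $\infty$. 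Thus $\tilde\stf^\infty$ is an incompressible flow around $\Body$ with bounded velocity, hence trivial by hypothesis: $\tilde\stf^\infty\equiv0$, so $A=G=0$.

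The remaining step contradicts $\tilde\stf^\infty\equiv0$, and \emph{this is the main obstacle}: because $\|\nabla\tilde\stf^n\|_{L^\infty(\Dom)}=1$ while everything tends to $0$ off the corners and at $\infty$, the supremum must be attained at points $\xx_n$ tending to one of the two corners $\xx_c$ with $\ssig_n:=|\xx_n-\xx_c|\to0$; one has to rule out such concentration, where there is no gradient compactness. I would do so by a blow-up: $\hat\stf^n(\xx):=\tilde\stf^n(\xx_c+\ssig_n\xx)/\ssig_n$ solves the same kind of uniformly elliptic equation with coefficients $\to I$, has $\|\nabla\hat\stf^n\|_{L^\infty}=1$ (scale-invariant) with $|\nabla\hat\stf^n|=1$ at a point of the unit circle, vanishes on the two straight sides of a wedge, and is posed on domains exhausting the infinite sector of opening $\Pola\in\loi\pi{2\pi}$; the uniform estimates away from the vertex yield a limit $\hat\stf^\infty$ that is harmonic on this infinite sector, vanishes on both sides, has $|\nabla\hat\stf^\infty|\le1$, and satisfies $|\nabla\hat\stf^\infty|=1$ somewhere on the unit circle. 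For a genuine protruding corner ($\Pola<2\pi$), every harmonic function on the sector vanishing on both bounding rays is a superposition of modes $\rad^{\pm k\pi/\Pola}\sin(k\pi\pola/\Pola)$ ($k\ge1$), each with gradient unbounded at the vertex or at $\infty$ (already $\rad^{\pi/\Pola}$ has unbounded gradient at the vertex, since $\pi/\Pola<1$), so bounded gradient forces $\hat\stf^\infty\equiv0$ --- a contradiction; this is the linearized counterpart of the corner subsolutions of Propositions \ref{prop:subso}--\ref{prop:singlesign}. (The borderline $\Pola=2\pi$ of a zero-thickness plate, where $\hat\stf^\infty$ could be a uniform flow parallel to the plate, needs a supplementary argument and is in any case covered by Theorems \ref{th:yminmax} and \ref{th:verticalsymmetry}.) Hence $\tilde\stf^\infty$ is a nontrivial incompressible flow around $\Body$ with bounded $\vv$, contradicting the hypothesis and proving the theorem.
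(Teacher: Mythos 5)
Your overall strategy---normalize, extract a subsequence via uniform Morrey/Schauder estimates, pass to an incompressible limit, and contradict the hypothesis---matches the paper's, but the normalization and the mechanism securing \emph{nontriviality} of the limit are genuinely different, and your version has a gap at the latter step. The paper normalizes by $\scln=\diam\set{\nabla\stfn}$ rather than by $\|\nabla\stfn\|_{L^\infty}$, and anchors nontriviality at two specific locations where locally uniform convergence is available: the point at infinity, where $\densn\vvn$ converges to the nonzero eastward vector $\densin\vvin$, and a fixed smooth boundary point with non-horizontal tangent, where the slip condition confines $\densn\vvn$ to a fixed transversal line. Comparing these two values shows $\scln$ is at least comparable to $|\densin\vvin|$, which gives both the uniform bound on $|\nabla\sstfn|$ and a lower bound on the spread of the limit's gradient tied to points that survive the compactness argument; no analysis at the corners is required. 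Your $L^\infty$ normalization instead forces you to confront concentration of $|\nabla\tilde\stf^n|$ at the corners---you correctly identify this as the crux---and to resolve it by a blow-up and a Liouville theorem on sectors.

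The gap is in that resolution: the sector Liouville argument fails precisely when the fluid angle at the corner equals $2\pi$, where $u=b\,y$ (a uniform flow tangent to the slit) is a nonzero harmonic function on the blown-up domain, vanishing on both sides, with bounded gradient. This is not a dismissible borderline case. The paper's definition of protruding corner explicitly allows exterior angle $2\pi$ (see the footnote inside the paper's own proof of Theorem \ref{th:smallmach}), and the tips of flat plates---the paper's central examples---have exactly this angle, as do, say, cambered zero-thickness profiles that are neither symmetric across the flow axis nor have their endpoints at the $y$-extrema, hence are covered by neither Theorem \ref{th:yminmax} nor Theorem \ref{th:verticalsymmetry}. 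Deferring to those theorems is in any case not a legitimate move inside a proof of Theorem \ref{th:smallmach}, which is stated for arbitrary $\Body$ satisfying the incompressible hypothesis. To close the gap you would need to exclude the blow-up limit $b\,y$ with $|b|=1$, i.e., exclude that $|\nabla\tilde\stf^n|\approx 1$ on balls of radius $R\ssig_n$ about the corner while $\nabla\tilde\stf^n\to 0$ at every fixed distance from it; nothing in your mode analysis does this, and it requires a genuinely new argument (or a switch to the paper's diameter normalization, which avoids the corner entirely). Until that is supplied, your proof does not establish the theorem in the stated generality.
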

\begin{proof}
  Assume there is a sequence $(\stfn)$ of nontrivial compressible flows with $\sup_{\Dom}\Machn\dnconv 0$. 
  On one hand, at infinity $\densn\vvn$ converges to $\densin\vvin$, 
  by our definition a nonzero vector pointing exactly eastward. 
  On the other hand $\Body$ cannot be contained in a horizontal line
  (else $\vv=\const=\vvi\neq 0$ would be a nontrivial incompressible flow around it),
  so $\Slipb$ must (be smooth and) have non-horizontal tangent in some point,
  where the slip condition
  requires $\densn\vvn$ to 
  to be tangential, hence (zero\footnote{%
    We did not require corners to have exterior angle $<2\pi$,
    else we could simply obtain $\vvn=0$ in some point at the body.
  }
  or) $n$-uniformly \emph{not} eastward. 
  Therefore the diameter of the set of values $\set{\densn\vvn}$ is at least comparable to $|\densin\vvin|$, $n$-uniformly. 
  Hence with 
  \begin{alignat*}{7} \sstfn \defeq \stfn/\scln \csep \scln\defeq\diam\set{\nabla\stfn} \end{alignat*}
  we obtain that $\set{\nabla\sstfn}$ has diameter $1$ but is also $n$-uniformly bounded. 

  Since $\sstfn$ also satisfies the planar uniformly elliptic PDE 
  \begin{alignat*}{7} 0 = \big(I-(\frac{\vvn}{\ssndn})^2\big):\nabla^2\sstfn, \end{alignat*} 
  Morrey estimates
  (\cite{morrey-1938,elling-protrudingangle}, \cite[chapter 12]{gilbarg-trudinger})
  yield that $\nabla\sstfn$ is $n$-uniformly $\Ck\alpha$ locally uniformly in $\Domi$.
  $\Ck\alpha$ is compactly embedded in $\Czero$, 
  so we may restrict to a subsequence so that $(\sstfn)$ and $(\nabla\sstfn)$ converge in $\Czero$ locally in $\Domi$,
  and same for $\densn$ which is a smooth function of $\nabla\sstfn$, and from
  \begin{alignat*}{7} 0 = \ndiv(\frac1{\densn}\nabla\sstfn) \end{alignat*} 
  convergence in the distributional sense yields the incompressible limit
  \begin{alignat*}{7} 0 = \ndiv(\frac1{\densz}\nabla\sstflim). \end{alignat*} 
  with constant $\densz>0$.
  Moreover since $\nabla\sstfn$ is $n$-uniformly bounded, and since the boundary is piecewise $\Cinf$, 
  the slip condition $\sstflim=0$ is inherited from $\sstfn$.

  Hence $\sstflim$ is an incompressible flow around $\Body$,
  and also nontrivial since the limit inherits $\diam\set{\nabla\sstflim}=1$. 
  Contradiction.
\end{proof}

\newcommand{\isoh}{h}
\newcommand{\zetav}{\tilde z}
\newcommand{\circlepot}{\tilde\cpot}
\newcommand{\ktstf}{\cpot}
\newcommand{\ktexp}{\nu} 
\newcommand{\ktangle}{\beta}
\newcommand{\zfromzeta}{f}
\begin{figure}
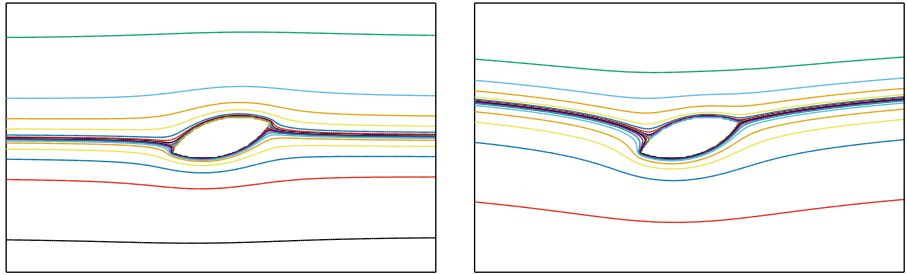

  \input{lens0.pstex}
  \input{lens.pstex}
  \caption{%
    Streamlines around K\'arm\'an-Trefftz lens with interior corner angle $270^\circ$, deflection $\ktangle=20^\circ$.
    Top left: $\Gam=0$.
    Top right: $\Gam$ chosen to yield bounded velocity at trailing (right) corner;
    rotate by $180^\circ$ to get the corresponding diagram for the leading corner.
  }
  \label{fig:lens}
  \end{figure}
\begin{example}
  As an application of Theorem \ref{th:smallmach} we consider symmetric lenses (fig.\ \ref{fig:lens}) which arise as a special case of K\'arm\'an-Trefftz profiles,
  which we recall here:
  
  The complex potential 
  \begin{alignat*}{7} \circlepot(\zetav) = \wwi\zetav + \wwi^*\frac1\zetav + \frac{\Gam}{2\pi i} \log\zetav \end{alignat*} 
  ($\log$ branch cut just below the negative real axis)
  satisfies the slip condition $\Im\circlepot=0$ on the unit circle.
  By rotational invariance we may conveniently rotate $\vvi$ instead of the body,
  considering $\wwi=|\wwi|\exp(i\ktangle)$, 
  where $\ktangle\in\loi{-\frac\pi2}{\frac\pi2}$ is the flow angle at infinity. 

  The idea of the K\'arm\'an-Trefftz transformation
  is to compose with a conformal map that deforms the circle into profiles with corners. 
  To create corners of prescribed angle, it is natural to use fractional powers $z\mapsto z^\ktexp$ (same branch cut). 
  They are conformal at $\pm 1$, but deform angles at $0,\infty$; we want the opposite, so we first apply the map
  \begin{alignat*}{7} \isoh(\zetav) \defeq \frac{\zetav+1}{\zetav-1} = \frac{1+1/\zetav}{1-1/\zetav}  \label{eq:isoh}\end{alignat*} 
  which swaps $\pm1$ with $0,\infty$. Define
  \begin{alignat*}{7} z = \zfromzeta(\zetav) \defeq \isoh(\isoh(\zetav)^\ktexp). \end{alignat*} 
  The image of the unit circle under $\zfromzeta$
  is a ``symmetric lens'' with corners $z=1=\zfromzeta(1)$ 
  and $z=-1=\zfromzeta(-1)$ of exterior (fluid) corner angle $\ktexp\pi$,
  with horizontal flat plate ($\ktexp=2$) and unit circle ($\ktexp=1$) as special cases.
  We consider $\ktexp\in\loi12$, i.e.\ protruding corners. 

  Since $\isoh$ is its own inverse, we get 
  \begin{alignat*}{7} \zfromzeta^{-1}(z) = \isoh(\isoh(z)^{1/\ktexp}) = \frac{ (z+1)^{1/\ktexp} + (z-1)^{1/\ktexp} }{ (z+1)^{1/\ktexp} - (z-1)^{1/\ktexp} }. \end{alignat*} 
  The $z$-plane complex potential is
  \begin{alignat*}{7} \cpot=\circlepot\circ\zfromzeta^{-1}. \end{alignat*}
  $1/\ktexp<1$ by choice, so $(\zfromzeta^{-1})'$ is unbounded at the corners $z=\pm1$. 
  Hence the complex velocity $\cpot'=(\circlepot'\circ\zfromzeta^{-1})\cdot(\zfromzeta^{-1})'$ can be bounded at the trailing edge $z=1$ only if 
  \begin{alignat*}{7} 0 = \circlepot'(1) 
  =
  \wwi - \wwi^* + \frac{\Gam}{2\pi i}
  \qeq
  \Gam = 4\pi \Im\wwi = 4\pi|\wwi|\sin\ktangle
  \end{alignat*} 
  Analogously requiring bounded velocity at the \defm{leading} corner $z=-1$ yields
  \begin{alignat*}{7} \Gam = -4\pi|\wwi|\sin\ktangle. \end{alignat*} 
  Clearly we cannot satisfy both conditions unless $\sin\ktangle=0$ (or $\wwi=0$), i.e.\ the case of horizontal lenses.

  We have constructed one flow for each $\Gam$, and by uniqueness (Proposition \ref{prop:vgam-uq}) there are no other ones. 
  Thus, K\'arm\'an-Trefftz symmetric lenses admit bounded-velocity nontrivial incompressible flows 
  only if the line between the corners is parallel to $\vvi$. 
  Otherwise, by Theorem \ref{th:smallmach} nontrivial low-Mach compressible flows do not exist, 
  even in the cases not covered by Theorem \ref{th:verticalsymmetry}
  and \ref{th:yminmax}.
  
  For horizontal lenses 
  we can exploit symmetry by putting a slip-condition straight wall onto the horizontal axis,
  obtaining an elliptic problem in the upper halfplane, 
  with all fluid-side corner angles smaller than $\pi$, 
  crucially yielding H\"older-continuous corner gradients in case of Dirichlet boundary conditions
  (see \cite{lieberman-pacj-1988,grisvard,mazya-plamenevskii} for unique solvability of the linearization 
  in weighted H\"older spaces and a priori estimates).
  However, some particular unsymmetric profiles should also admit proofs of existence; 
  that would require a more careful analysis of the corner constraints 
  and their relationship to circulation and angle of attack. 
\end{example}

\section*{Acknowledgement}

This material is based upon work partially supported by the
National Science Foundation under Grant No.\ NSF DMS-1054115
and by Taiwan MOST grant 105-2115-M-001-007-MY3.

\providecommand{\bysame}{\leavevmode\hbox to3em{\hrulefill}\thinspace}
\providecommand{\MR}{\relax\ifhmode\unskip\space\fi MR }
% \MRhref is called by the amsart/book/proc definition of \MR.
\providecommand{\MRhref}[2]{%
  \href{http://www.ams.org/mathscinet-getitem?mr=#1}{#2}
}
\providecommand{\href}[2]{#2}


\begin{thebibliography}{Mor58}

\bibitem[Bat67]{batchelor}
G.K. Batchelor, \emph{An introduction to fluid dynamics}, Cambridge
  Mathematical Library, 1967.

\bibitem[Ber04]{bernstein-1904}
{S.} Bernstein, \emph{Sur la nature analytique des solutions des {\'equations}
  aux {d\'eriv\'ees} partielles du second ordre}, Math. Ann. \textbf{59}
  (1904), 20--76.

\bibitem[Ber54]{bers-exi-uq-potf}
{L.} Bers, \emph{Existence and uniqueness of a subsonic flow past a given
  profile}, Comm. Pure. Appl. Math. \textbf{7} (1954), 441--504.

\bibitem[Ber62]{berg-cpam-1962}
{P.} Berg, \emph{The existence of subsonic helmholtz flows of a compressible
  fluid}, Comm. Pure Appl. Math. \textbf{15} (1962), 289--347.

\bibitem[Ella]{elling-protrudingangle}
{V.} Elling, \emph{Non-existence of subsonic and incompressible flows in
  non-straight infinite angles}, Submitted.

\bibitem[Ellb]{elling-hyp2016}
\bysame, \emph{Nonexistence of irrotational flow around solids with protruding
  corners}, Submitted to Proceedings of HYP2016.

\bibitem[Ell17]{elling-polylow}
\bysame, \emph{Nonexistence of low-mach irrotational inviscid flows around
  polygons}, J. Diff. Eqns. \textbf{262} (2017), no.~3, 2705--2721.

\bibitem[FG57]{finn-gilbarg-uniqueness}
{R.} Finn and {D.} Gilbarg, \emph{Asymptotic behaviour and uniqueness of plane
  subsonic flows}, Comm. Pure Appl. Math. \textbf{10} (1957), 23--63.

\bibitem[FK34]{frankl-keldysh-1934}
{F.I.} Frankl and {M.} Keldysh, \emph{Die {\"aussere} {Neumann'sche} {Aufgabe}
  {f\"ur} nichtlineare elliptische {Differentialgleichungen} mit {Anwendung}
  auf die {Theorie} der {Fl\"ugel} im kompressiblen {Gas}}, Bull. Acad. Sci.
  {URSS} \textbf{12} (1934), 561--607.

\bibitem[Gri85]{grisvard}
{P.} Grisvard, \emph{Elliptic problems in nonsmooth domains}, Pitman, 1985.

\bibitem[GT83]{gilbarg-trudinger}
{D.} Gilbarg and {N.S.} Trudinger, \emph{Elliptic partial differential
  equations of second order}, 2nd ed., A Series of Comprehensive Studies in
  Mathematics, vol. 224, Springer, 1983.

\bibitem[KM81]{klainerman-majda-singular}
{S.} Klainerman and {A.} Majda, \emph{Singular limits of quasilinear hyperbolic
  systems with large parameters and the incompressible limit of compressible
  fluids}, Comm. Pure Appl. Math. \textbf{34} (1981), 481--524.

\bibitem[Lie88]{lieberman-pacj-1988}
{G.} Lieberman, \emph{{H\"older} continuity of the gradient at a corner for the
  capillary problem and related results}, Pac. J. Math. \textbf{133} (1988),
  no.~1, 115--135.

\bibitem[Lud52]{ludford-1951}
{G.S.S.} Ludford, \emph{The behaviour at infinity of the potential function of
  a two-dimensional subsonic compressible flow}, J. Math. Phys. \textbf{30}
  (1951--1952), 117--130.

\bibitem[Mor38]{morrey-1938}
{C.B.} Morrey, \emph{On the solution of quasi-linear elliptic partial
  differential equations}, Trans. Amer. Math. Soc. (1938), 126--166.

\bibitem[Mor58]{morrey-analyticity-i}
\bysame, \emph{On the analyticity of the solutions of analytic non-linear
  elliptic systems of partial differential equations.}, Amer. J. of Math.
  \textbf{80} (1958), no.~1, 198--218.

\bibitem[MP84]{mazya-plamenevskii}
{V.G.} Maz'ya and {B.A.} Plamenevskii, \emph{Estimates in {$L_p$} and in
  {H\"older} classes and the {Miranda}-{Agmon} maximum principle for solutions
  of elliptic boundary value problems in domains with singular points}, Amer.
  Math. Soc. Transl. (2) \textbf{123} (1984), 1--88.

\bibitem[Pra24]{prandtl-1922}
{L.} Prandtl, \emph{{\"Uber die Entstehung von Wirbeln in der idealen
  Fl\"ussigkeit, mit Anwendung auf die Tragfl\"ugeltheorie und andere
  Aufgaben}}, {Vortr\"age aus dem Gebiet der Hydro- und Aerodynamik}, Springer,
  1924.

\bibitem[Shi52]{shiffman-exi-potf}
{M.} Shiffman, \emph{On the existence of subsonic flows of a compressible
  fluid}, J. Rat. Mech. Anal. \textbf{1} (1952), 605--652.

\end{thebibliography}
\end{document}